\newcommand{\ratio}{\nu, L}
\def\mean#1{\mathchoice%
          {\mathop{\kern 0.2em\vrule width 0.6em height 0.69678ex depth -0.58065ex
                  \kern -0.8em \intop}\nolimits_{\kern -0.4em#1}}%
          {\mathop{\kern 0.1em\vrule width 0.5em height 0.69678ex depth -0.60387ex
                  \kern -0.6em \intop}\nolimits_{#1}}%
          {\mathop{\kern 0.1em\vrule width 0.5em height 0.69678ex
              depth -0.60387ex
                  \kern -0.6em \intop}\nolimits_{#1}}%
          {\mathop{\kern 0.1em\vrule width 0.5em height 0.69678ex depth -0.60387ex
                  \kern -0.6em \intop}\nolimits_{#1}}}
\newcommand{\tw}{\tilde{w}}
\def\en{\mathbb N}
\def\er{\mathbb R}
\newcommand{\ern}{\mathbb{R}^n}
\def\loc{\operatorname{loc}}
\newcommand{\data}{\texttt{data}}
\newtheorem{theorem}{Theorem}
\def\loc{\operatorname{loc}}
\newtheorem{lemma}{Lemma}
\newcommand{\divo}{\textnormal{div}}
\newcommand{\mint}{\mathop{\int\hskip -1,05em -\, \!\!\!}\nolimits}
\numberwithin{equation}{section}
\newenvironment{proof}{\smallskip\noindent\emph{Proof.}\hspace{1pt}}%
{\hspace{-5pt}{\nobreak\quad\nobreak\hfill\nobreak$\square$\vspace{8pt}%
\par}\smallskip\goodbreak}
\def\en{\mathbb N}
\def\er{\mathbb R}
\newcommand\eps\varepsilon
\def\eqn#1$$#2$${\begin{equation}\label#1#2\end{equation}}
\newcommand{\be}{\begin{equation}}
\newcommand{\ee}{\end{equation}}
\newcommand{\osc}{\operatorname{osc}}
\newcommand{\supp}{\operatorname{supp}}
\newcommand{\dist}{\operatorname{dist}}
\newcommand{\snr}[1]{\lvert #1\rvert}
\newcommand{\nr}[1]{\lVert #1 \rVert}
\newcommand{\diver}{\operatorname{div}}
\newcommand{\N}{\mathbb{N}}
\def\name[#1, #2]{#1 #2}
\newcommand{\rif}[1]{(\ref{#1})}
\newcommand{\trif}[1] {\textnormal{\rif{#1}}}
\newcommand{\stackleq}[1]{\stackrel{\rif{#1}}{ \leq}}
\definecolor{ffqqqq}{rgb}{1.,0.,0.}
\definecolor{uuuuuu}{rgb}{0.26666666666666666,0.26666666666666666,0.26666666666666666}
\begin{document}

\title{A borderline case of Calder\'on-Zygmund estimates for non-uniformly elliptic problems}

\author{
\textsc{Cristiana De Filippis\thanks{Mathematical Institute, University of Oxford, Andrew Wiles Building, Radcliffe Observatory Quarter, Woodstock Road, Oxford, OX26GG, Oxford, United Kingdom. E-mail:
      \texttt{Cristiana.DeFilippis@maths.ox.ac.uk}} \ and   Giuseppe Mingione \thanks{Dipartimento SMFI, Universit\`a di Parma, Viale delle Scienze 53/a, Campus, 43124 Parma, Italy
E-mail: \texttt{giuseppe.mingione@unipr.it}}}}

\date{\today
}

\maketitle
\thispagestyle{plain}

\vspace{-0.6cm}
\begin{abstract}
We show, in a borderline case which was not covered before, the validity of nonlinear Calder\'on-Zygmund estimates for a class of non-uniformly elliptic problems driven by double phase energies. 
\end{abstract}

       \vspace{5pt}                           %
\vspace{0.2cm}

\setlength{\voffset}{-0in} \setlength{\textheight}{0.9\textheight}

\setcounter{page}{1} \setcounter{equation}{0}
%\tableofcontents
\section{Introduction}

The aim of this paper is to complete the Calder\'on-Zygmund type theory obtained in \cite{CM3}, achieving a delicate borderline case that has been left open there. Let us briefly summarize the situation. In \cite{BCM1, BCM3, CM1, CM2} the authors have provided a basic regularity theory for minimizers of double phase functionals of the type  
\eqn{split}
$$
\mathcal P(v, \Omega) :=\int_{\Omega} (|Dv|^p+a(x)|Dv|^q) \, dx\;, $$
where
\eqn{ass1}
$$
1<  p <  q\,, \qquad \qquad 0\leq a(\cdot) \in C^{0,\alpha}(\Omega)\,, \qquad \qquad \alpha \in (0,1]\,.
$$
Here, as in the rest of the paper, $\Omega\subset \er^n$ is a bounded open subset of $\er^n$ and $n\geq 2$; we refer to Section \ref{sezionedue} below for more notation. The functional $\mathcal P$ is characterized by the fact that it changes the rate of ellipticity according to the positivity of the coefficient $a(x)$: when $a(x)>0$ the integrand has $q$-polynomial behaviour with respect to the gradient, otherwise it shows $p$-polynomial one.  Needless to say, when $a(x)\equiv 0$ the functional $\mathcal P$ reduces to the standard $p$-Laplacean functional (we refer to the papers \cite{KuMi1, KuMi2} for a recent update of regularity theory in the standard $p$-case and to \cite{Uh, Ur} for the beginnings). This kind of functional has been introduced by Zhikov in a series of remarkable papers \cite{Z1, Z2, Z3}. He was motivated by speculations on theoretical aspects of the Calculus of Variations such as the Lavrentiev phenomenon, and by some problems arising in the Homogenization of composite and strongly anisotropic materials. We refer the reader to the introductory sections of \cite{CM1, CM2}, and especially, of \cite{BCM2}, for a comprehensive discussion on the subject and its role in the setting of non-uniformly elliptic problems and modern regularity theory. 

Recently, Colombo and the second-named author of the present paper have investigated the validity of Calder\'on-Zygmund estimates for solutions to non-homogenous equations that are naturally connected to the Euler-Lagrange equations of the functional in \rif{split}. The model equations is in this case given by 
\eqn{basic-E}
$$
-\divo\, (p|Du|^{p-2}Du + a(x)q|Du|^{q-2}Du)=-\divo\, (|F|^{p-2}F +  a(x)|F|^{q-2}F)\;.
$$
which is in fact the Euler-Lagrange equation of the functional 
$$
v \to \mathcal P(v, \Omega) -  \int_{\Omega} \langle |F|^{p-2}F+a(x)|F|^{q-2}F, Dv\rangle\, dx
$$
where $F \colon \Omega \to \er^n$ is a given vector field. For this situation one of the main results of \cite{CM3} claims the validity of the sharp implication 
\eqn{implica}
$$
|F|^{p} + a(x)|F|^{q} \in L^\gamma_{\loc}(\Omega) \Longrightarrow |Du|^{p} + a(x)|Du|^{q}\in L^\gamma_{\loc}(\Omega) \qquad \forall \ \gamma >1\;,
$$
under the main assumption 
\eqn{ass3}
$$
\frac{q}{p}< 1+\frac{\alpha}{n}\;.
$$
As in fact shown in \cite{ELM}, \rif{implica} fails to hold in the case $
q/p> 1+\alpha/n.
$ In this paper we show that \rif{implica} still holds in the delicate limiting case $q/p = 1+\alpha/n$, thereby replacing \rif{ass3} by 
\eqn{ass4}
$$
\frac{q}{p}\leq 1+\frac{\alpha}{n}\;.
$$
Borderline cases are always delicate, and, indeed, in this paper we shall exploit a few subtle facts that have been overlooked in \cite{CM3}. We again refer to the Introduction of \cite{CM3} for a description of the problems concerning the type of results in \rif{implica} and their place in what is nowadays called Nonlinear Calder\'on-Zygmund theory. We just confine ourselves to remark that, in view of the example in \cite{ELM}, the condition in \rif{ass4} is necessary to obtain the main result of this paper, that is Theorem \ref{cz} below. We also remark that, in the standard case $a(\cdot)\equiv 0$ or $p=q$, the results in \rif{implica} gives back the classical Calder\'on-Zygmund estimate for $p$-Laplacean type operators. 

Our result here actually holds for a class of equations that are more general of the one in \rif{basic-E} -- and in fact this has already been dealt with in \cite{CM3}. We shall consider equations of the type
\begin{flalign}\label{eq}
-\diver A(x,Du)=-\diver G(x,F) \qquad \mathrm{in} \ \Omega \subset \er^n\;.
\end{flalign}
The assumptions on the continuous vector field $A \colon \Omega \times \er^n\to \er^n$ are now as follows:
\begin{flalign}\label{assa}
\begin{cases}
\ A\in C(\Omega\times \mathbb{R}^{n},\mathbb{R}^{n}) \ \ \mathrm{and} \ \ z\mapsto A(\cdot,z)\in C^{1}(\mathbb{R}^{n}\setminus\{0\},\mathbb{R}^{n})\\
\ \snr{A(x,z)}+\snr{\partial A(x,z)}\snr{z}\le L\left(\snr{z}^{p-1}+a(x)\snr{z}^{q-1}\right)\\
\ \nu\left(\snr{z}^{p-2}+a(x)\snr{z}^{q-2}\right)\snr{\xi}^{2}\le \langle \partial A(x,z)\xi,\xi\rangle\\
\ \snr{A(x_{1},z)-A(x_{2},z)}\le L\snr{a(x_{1})-a(x_{2})}\snr{z}^{q-1}
\end{cases}\;,
\end{flalign}
for all $z\in \mathbb{R}^{n}\setminus \{0\}$, $\xi \in \mathbb{R}^{n}$, $x,x_{1},x_{2}\in \Omega$, where $0 < \nu\leq L < \infty$ are fixed ellipticity constants. The coefficient $a(\cdot)$ and the numbers $p,q, \alpha$ have already been specified in \rif{ass1}. As for the right-hand side, the Carath\'eodory regular vector field $G \colon \Omega \times \er^n\to \er^n$ is instead assumed to verity the following natural growth conditions:
\begin{flalign}\label{G}
\snr{G(x,z)}\le L\left(\snr{z}^{p-1}+a(x)\snr{z}^{q-1}\right)\;.
\end{flalign}
Assumptions \rif{assa}-\rif{G} perfectly cover the case displayed in \rif{basic-E} and are in fact modelled on them. Finally, before going on, let us mention that in the rest of the paper we shall use the compact notation
\eqn{dataref} 
$$\data \equiv 	\data \left(n,p,q,\alpha,\ratio,\|a\|_{L^\infty}, [a]_{\alpha}, \|H(\cdot,Du)\|_{L^1(\Omega)}\right)\;,$$
including several data of the problem considered. In the rest of the paper we shall denote 
\eqn{defiH}
$$
H(x, z):= |z|^p + a(x)|z|^q
$$
whenever $x\in \Omega$ and $z\in \er^n$; we shall, with some abuse of notation, keep on denoting as in \rif{defiH} also in the case $z\in \er$.  
We then have the following main result of the paper:
\begin{theorem}\label{cz}
Let $u\in W^{1,1}(\Omega)$ be a distributional solution to \trif{eq}, such that $H(x,Du), H(x,F) \in L^1(\Omega)$ and under the assumptions \trif{ass1}, \trif{ass4}, \trif{assa}, \trif{G}. Then \trif{implica} holds. 
Moreover, fix open subsets $\Omega_0\Subset \tilde \Omega_0\Subset \Omega$ with $\dist (\Omega_0 , \partial \tilde \Omega_0)\approx \dist (\tilde \Omega_0 , \partial \Omega)\approx \dist (\Omega_0 , \partial  \Omega)$; for every $\gamma > 1$, there exist a radius 
$r>0$ and a constant $c\geq 1$, both depending on $\textnormal{\data}, \dist ( \dist (\Omega_0, \partial \Omega), \gamma$ and $ \|F\|_{L^\gamma(\tilde \Omega_0)}$, such that the inequality
\eqn{final-estimate}
$$
\left(\mean{B_{\varrho/2}} [H(x,Du)]^\gamma \, dx\right)^{1/\gamma}  \leq  c\mean{B_{\varrho}} H(x,Du) \, dx  +
c  \left(\mean{B_{\varrho}} [H(x,F)]^\gamma \, dx\right)^{1/\gamma}
$$
holds for every ball $B_\varrho \subset \Omega_0$ such that $\varrho \leq r$.\end{theorem}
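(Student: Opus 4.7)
I would adapt the scheme of \cite{CM3}, namely a good-$\lambda$/level-set iteration for the fractional maximal function of $H(\cdot, Du)$ based on two-tier comparison estimates, paying special attention to which ingredients become critical at the borderline $q/p = 1 + \alpha/n$. As a preliminary I would approximate $u$ by solutions to regularised $(p,q)$-problems; the density of smooth functions in the natural Musielak--Orlicz energy space associated to $H$ holds precisely under \trif{ass4}, so a priori estimates for the approximants pass to the limit without Lavrentiev gap.

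The comparison on $B_\varrho \Subset \Omega_0$ proceeds in two stages. First, let $v$ solve $\dive A(x, Dv) = 0$ in $B_\varrho$ with $v = u$ on $\partial B_\varrho$; testing with $u-v$ and using \trif{assa}, \trif{G} gives
\[
\mean{B_\varrho} H(x, D(u-v))\, dx \le c \mean{B_\varrho} H(x, F)\, dx.
\]
Second, depending on whether $\inf_{B_\varrho} a(\cdot) > K[a]_\alpha \varrho^\alpha$ (``$(p,q)$-phase'') or not (``$p$-phase''), let $w$ solve a frozen equation $\dive A_0(Dw) = 0$ on $B_{\varrho/2}$ with $w = v$ on the boundary, where $A_0$ is obtained from $A$ by replacing $a(x)$ by $a_0 := \inf_{B_\varrho} a(\cdot)$ or by $0$, respectively. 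In either phase the gradient bounds of \cite{CM1, CM2} supply
\[
\|H(\cdot, Dw)\|_{L^\infty(B_{\varrho/4})} \le c \mean{B_{\varrho/2}} H(x, Dv)\, dx,
\]
and the $v$-to-$w$ energy comparison picks up an error governed by the coefficient oscillation $[a]_\alpha \varrho^\alpha$ on $B_\varrho$.

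Feeding these two comparisons into a Vitali covering at the exit time of $\mathbf M(H(\cdot, Du))$ produces a good-$\lambda$ inequality of Caffarelli--Peral type, which self-improves to $L^\gamma$ for every $\gamma > 1$. The crux is the $(p,q)$-phase error: after using $|Dw|^q = |Dw|^p \cdot |Dw|^{q-p}$ together with the $L^\infty$-bound, it reduces to a term of the form
\[
[a]_\alpha \varrho^\alpha \Bigl(\mean{B_\varrho} H(x, Du)\, dx\Bigr)^{(q-p)/p} \mean{B_\varrho} H(x, Du)\, dx,
\]
which at exit-time scale carries an effective factor $\varrho^{\alpha - n(q-p)/p}$. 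Under \trif{ass3} this exponent is strictly positive and supplies decay as $\varrho \to 0$; under the borderline \trif{ass4} it vanishes. My proposed remedy is to exploit a Gehring-type self-improving higher integrability for $H(\cdot, Du)$, which under \trif{ass4} still yields a quantitative exponent $\delta_0 = \delta_0(\textnormal{\data}) > 0$; the extra $\varrho^{\delta_0}$-factor restores the smallness needed to close the iteration, at the price of bootstrapping through a finite sequence $\gamma_0 < \gamma_1 < \cdots < \gamma$ of target exponents until the prescribed integrability is reached.

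The main obstacle will be keeping $\delta_0$ and the Lipschitz constants for the frozen $(p,q)$-problem genuinely uniform as the frozen coefficient $a_0$ approaches zero in the borderline regime, and tracking the dependence of all constants through the iteration so that \trif{final-estimate} holds with $r$ and $c$ depending as stated on $\|F\|_{L^\gamma(\tilde\Omega_0)}$. This is precisely where the ``subtle facts overlooked in \cite{CM3}'' must enter: a sharpened, quantitative version of the frozen $(p,q)$-regularity that degrades in a controlled way as $a_0 \to 0^+$, together with a careful balance between the Gehring gain and the $(p,q)$-phase error right at the borderline scale.
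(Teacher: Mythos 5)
Your high-level scheme is the same as the paper's: exit-time/good-$\lambda$ covering, a two-stage comparison ($u\to v$ with $x$-dependent $A$, then $v\to w$ with coefficients frozen), a phase dichotomy governed by $\inf_{B}a$ vs.\ $[a]_\alpha\varrho^\alpha$, and the observation that the critical error in the degenerate phase carries a factor $\varrho^{\alpha-n(q-p)/p}$ that vanishes at the borderline $q/p=1+\alpha/n$. You also correctly identify Gehring-type higher integrability for $H(\cdot,Du)$ as the device that turns the negative power $-n(q-p)/p$ into $-n(q-p)/(p(1+\sigma))$, so that $\kappa_1:=\alpha-n(q-p)/(p(1+\sigma))>0$; this is exactly the paper's $\kappa_1$-mechanism (see \rif{kappa1}). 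No bootstrap through a finite sequence of exponents is needed: once $\kappa_1>0$ is secured, a single pass of the level-set iteration yields every $\gamma>1$.

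There are, however, two genuine gaps. \emph{First}, you propose to close the $p$-phase comparison by invoking an $L^\infty$-bound for the frozen solution and then writing $|Dw|^q=|Dw|^p|Dw|^{q-p}$. But the Lipschitz bound holds only on a strictly interior ball while the comparison error lives on the larger one, and after Young's inequality the error contains \emph{both} $a(x_{i,m})|Dw_i|^q$ and $a(x_{i,m})|Dv_i|^q$ (in the paper's notation). The first of these requires the conditional reverse H\"older inequality of Theorem~\ref{T2}, which is not a Lipschitz estimate but a quantitative $L^{2q-p}$-bound obtained from the fractional Sobolev differentiability of $Dw$ (Theorem~\ref{T1}); the second requires combining Theorem~\ref{T4} (the constant-coefficient Calder\'on--Zygmund estimate applied with $\gamma=q/p$) with a reverse H\"older for $Dw_i$ in $L^{q^2/p}$, again from Theorem~\ref{T1}. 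None of this is replaced by an $L^\infty$-bound. \emph{Second}, the ``subtle facts overlooked in \cite{CM3}'' are not a sharpened version of frozen $(p,q)$-regularity degrading as $a_0\to0^+$; those estimates are already uniform in $a_0\ge0$. What is actually new is a \emph{preliminary uniform Gehring step for the regularized approximants $w_k$} (Lemmas~\ref{unintgeh}--\ref{unbougeh}), which upgrades $Dw_k\in L^p$ to $Dw_k\in L^{p(1+\sigma)}$ uniformly in $k$; this improved lower endpoint is then fed into the interpolation step of the fractional estimate \rif{27}--\rif{28}, turning the requirement $\alpha<\beta$ (impossible for $\beta<\alpha$) into $\alpha<\beta(1+\sigma)$ (satisfiable). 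Without this there is no admissible $\beta<\alpha$ at the borderline and Theorems~\ref{T1}--\ref{T2} do not hold. Your proposal does not contain this ingredient and therefore cannot produce the reverse H\"older inequality that powers the $p$-phase estimate.
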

We remark that the previous theorem has been obtained in \cite{CM3} assuming \rif{ass3}. In this paper, starting from the approach considered in \cite{CM3}, we are able to achieve the limiting case in \rif{ass4} by using an improved approach of the fractional estimates originally developed in \cite{CM1}. These are estimates aimed at proving that the gradient of solutions to homogeneous equations as
\begin{flalign}\label{eq-hom}
-\diver A(x,Dw)=0 \;,
\end{flalign} 
under assumptions as \rif{ass4} and \rif{assa} belong to suitable fractional Sobolev spaces. See Theorem \ref{T1} below. Let us remark that fractional differentiability properties of solutions to various types of potentially degenerate elliptic equations are a powerful tool in regularity theory (see for instance \cite{Min2} where these are employed to estimate singular sets of solutions), and have been recently the object of investigation in different settings for both local and nonlocal operators including rough data too \cite{AKM, KuMi1, KuMi3}. The improvement in the arguments of \cite{CM1} then comes from a further application of a preliminary higher integrability result for solutions to \rif{eq-hom}, that uses certain classical self-improving properties of reverse H\"older inequalities. Once this improvement is reached we can revisit the proof given in \cite{CM3} to get the statement of Theorem \ref{cz}. Let us finally observe that the type of problems considered in this paper are related to so called functionals with $(p,q)$-growth conditions. These have been extensively treated in the literature over the last years starting by the papers of Marcellini \cite{M1, M2}. Eventually, several contributions have been given in this direction, see for instance \cite{DF1, DF2, DO, H1, H2, H3, lieb1, lieb2, OK1, OK2, RT2, TU} amongst the most closely related to the setting we are considering, that is the one of non-autonomous functionals with non-standard growth conditions.

\section{Notation and preliminaries}\label{sezionedue}
In the rest of the paper we shall denote by $c$ a general positive constant, possibly varying from line to line; special occurrences will be denoted by $c_1,  c_*, \bar c$ or the like. All such constants will always be {\em larger or equal than one}; relevant
dependencies on parameters will be emphasised using parentheses, i.e., ~$c_{1}\equiv c_1(n,p,q)$ means that $c_1$ 
depends on $n,p,q$. We denote by $ B_r(x_0):=\{x \in \er^n \, : \,  |x-x_0|< r\}$ the open ball with center $x_0$ and radius 
$r>0$; when no confusion will arise, we shall omit denoting the centre as follows: $B_r \equiv B_r(x_0)$. Given a ball $B$ and $\gamma >0$, we shall often denote by $\gamma B$ ($B/\gamma$) the concentric ball with radius magnified of order $\gamma$ ($1/\gamma$). 
Unless otherwise stated, different balls in the same context will have the same centre. With $\mathcal B \subset \er^{n}$ being a measurable subset with positive measure $0<|\mathcal B|<\infty$, and with $g \colon \mathcal B \to \er^{k}$, $k\geq 1$, being a locally integrable map, we shall denote by  $$
   (g)_{\mathcal B} \equiv \mean{\mathcal B}  g(x) \, dx  := \frac{1}{|\mathcal B|}\int_{\mathcal B}  g(x) \, dx
$$
its integral average. Whenever $\mathcal B \subset \Omega$ being non-empty, we shall also denote, as usual
$$
[a]_{0,\alpha}\equiv  [a]_{0,\alpha; \mathcal B} := \sup_{x,y \in \mathcal B, x \not= y} \, \frac{|a(x)-a(y)|}{|x-y|^\alpha}\,,$$
and $[a]_{0,\alpha}\equiv  [a]_{0,\alpha; \Omega}$. 
We shall  use the auxiliary vector fields $V_p,V_q\colon \er^n \to \er^n$ defined by
$$
V_p(z):= |z|^{(p-2)/2}z \qquad \mbox{and}\qquad V_q(z):= |z|^{(q-2)/2}z
$$
whenever $z \in \ern$. As a consequence of \rif{assa}$_3$, it holds that
\eqn{monotonicity}
$$ 
\left\{
\begin{array}{c}
|V_p(z_1)-V_p(z_2)|^2 + a(x) |V_q(z_1)-V_q(z_2)|^2  \leq c \langle A(x,z_1)-A(x, z_2), z_1-z_2\rangle\\ [8 pt]
H(x, z)  \leq c\langle A(x,z), z\rangle 
\end{array}
\right.
$$
whenever $z,z_1, z_2 \in \er^n$, $x\in \Omega$, and  
with $c \equiv c (n,\nu, p,q)$. 
where the implied constants still depend on $n\in \en$ and $t \in \{p,q\}$ (see \cite{CM1} and related references). Let us recall some basic terminology about generalized Orlicz-Sobolev spaces, i.e., Sobolev spaces defined by the fact that the distributional derivatives lie in a suitable Orlicz-Musielak space. We are mainly interested in spaces related to the Young type function defined in \rif{defiH} and its constant coefficients variants (see for instance \rif{72} below). 
These are defined by
\eqn{defi-sob}
$$
W^{1, H}(\Omega):= \left\{ u \in W^{1,1}(\Omega)\, \colon \, H(\cdot, Du)\in L^1(\Omega) \right\} \;,
$$
with the local variant being defined in the obvious way and $W^{1, H}_0(\Omega)=W^{1, H}(\Omega)\cap W^{1,p}_0(\Omega)$. For more details we refer to \cite{BCM3} and related references. Next to equations as in \rif{eq} and \rif{eq-hom}, we shall consider boundary value problems involving operators with constant coefficients of the type
\begin{flalign}\label{hommi}
\begin{cases}
\ -\diver A_0(Dv)=0 \ \ \mathrm{in} \ B\\
\ v\in w+W^{1,H_{0}}_{0}(B)\;,
\end{cases}
\end{flalign}
where $B\subset \er^n$ is a ball, $ w\in W^{1,H_{0}}(B)$ and 
\eqn{72}
$$
H_{0}(z):=\snr{z}^{p}+a_{0}\snr{z}^{q}\;,
$$
for some $a_0\geq 0$. 
We consider the following assumptions on $A_0(\cdot)$, that are parallel to those in \rif{assa} (and actually coincide with \rif{assa} when $a(x)\equiv a_0$)) and indeed follow a notation similar to the one in \rif{assa}:
\begin{flalign}\label{73}
\begin{cases}
\ A_0\in C( \mathbb{R}^{n},\mathbb{R}^{n})\cap C^{1}(\mathbb{R}^{n}\setminus\{0\},\mathbb{R}^{n})\\
\ \snr{A_{0}(z)}+\snr{\partial A_{0}(z)}\snr{z}\le L(\snr{z}^{p-1}+a_{0}\snr{z}^{q-1})\\
\ \nu (\snr{z}^{p-2}+a_{0}\snr{z}^{q-2})\snr{\xi}^{2}\le \langle \partial A_{0}(z)\xi,\xi \rangle\;.
\end{cases}
\end{flalign}
Solvability of \rif{hommi} follows using monotonicity methods as described in \cite{CM3}. We then have the following result obtained in \cite[Section 5]{CM3}. We only mention that in the next statement no restriction occurs on the ratio $q/p$; the proof is exactly the same as the one presented in \cite{CM3}. 
\begin{theorem}\label{T4}
Under assumptions \trif{73} with $1<p<q$, let $v\in W^{1,H_{0}}(B)$ be the unique distributional solution to \eqref{hommi} with $w \in W^{1,H_{0}}(B)$. Then
\begin{flalign*}
H_{0}(Dw)\in L^{\gamma}(B)\Rightarrow H_{0}(Dv) \in L^{\gamma}(B) \qquad \mathrm{holds \ for \ every \ }\gamma >1\;.
\end{flalign*}
Moreover, for every $\gamma>1$, there exists a constant $c\equiv c(n,p,q,\nu,L,\gamma)$, which is a non decreasing function of $\snr{B}$ and, in particular, it is independent of $a_{0}$, such that the following inequality holds:
\begin{flalign}\label{74}
\mean{B}[H_{0}(Dv)]^{\gamma} \, dx \le c\mean{B}[H_{0}(Dw)]^{\gamma} \, dx\;.
\end{flalign}
\end{theorem}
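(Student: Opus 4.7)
The plan is to establish Theorem \ref{T4} in two stages: first an $L^{1}$-type energy estimate, and then a nonlinear Calder\'on--Zygmund self-improvement that lifts integrability to arbitrary $\gamma>1$. Throughout, the delicate point -- and what I expect to be the main obstacle -- is that every constant entering the argument must remain independent of $a_{0}\in[0,\infty)$; only $n,p,q,\nu,L,\gamma$ and (monotonically) $\snr{B}$ are allowed to appear.

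The baseline $\gamma=1$ case follows by testing the weak form of \eqref{hommi} against the admissible competitor $v-w\in W^{1,H_{0}}_{0}(B)$. The coercivity inherited from \rif{73}$_{3}$ (the constant-coefficient analogue of the second line of \rif{monotonicity}) yields $H_{0}(Dv)\le c\langle A_{0}(Dv),Dv\rangle$, so the left-hand side controls $\int_{B}H_{0}(Dv)\,dx$. On the right, the growth bound \rif{73}$_{2}$ produces two terms bounded by $\snr{Dv}^{p-1}\snr{Dw}$ and $a_{0}\snr{Dv}^{q-1}\snr{Dw}$, both handled via Young's inequality with conjugate exponents $(p,p')$ and $(q,q')$ respectively; after absorption this gives \rif{74} for $\gamma=1$ with a constant depending only on $n,p,q,\nu,L$.

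For $\gamma>1$, I would run a Calder\'on--Zygmund bootstrap tailored to the Young function $H_{0}$. First, a Caccioppoli-type inequality on concentric interior balls $B_{\varrho}\subset B_{2\varrho}\subset B$, obtained by testing \eqref{hommi} against $(v-(v)_{B_{2\varrho}})\eta^{q}$ for a smooth cutoff $\eta$, combined with a Sobolev--Poincar\'e inequality in $W^{1,H_{0}}$ -- whose constant depends only on $n,p,q$ because $H_{0}$ satisfies $\Delta_{2}$ uniformly in $a_{0}$ -- produces a reverse H\"older inequality for $H_{0}(Dv)$ on interior balls. Gehring's lemma then yields a small universal exponent $\eps_{0}>0$ of extra integrability. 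To reach an arbitrary $\gamma$, I would perform a stopping-time decomposition of the super-level sets $\{H_{0}(Dv)>\lambda\}$, comparing $v$ on each selected subball with the $A_{0}$-harmonic function matching its boundary values; the higher-integrability just obtained, together with the hypothesis $H_{0}(Dw)\in L^{\gamma}$, supplies a good-$\lambda$ inequality which integrates by the layer-cake formula to give \rif{74}.

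The main obstacle is securing $a_{0}$-uniformity of every structural constant entering the argument: the monotonicity-coercivity pair \rif{monotonicity}, the growth bound \rif{73}$_{2}$, the $\Delta_{2}$ constants of $H_{0}$, the Sobolev--Poincar\'e constants in $W^{1,H_{0}}$, and the Gehring exponent $\eps_{0}$. Each of these can be checked to depend only on $n,p,q,\nu,L$ by direct inspection of the structural dependencies, so no $a_{0}$ enters. The residual $\snr{B}$-dependence of the final constant reflects the lack of scale invariance of the mixed $(p,q)$-structure: the natural dilation $v(x)\mapsto\varrho^{-1}v(\varrho x)$ maps $a_{0}$ to $a_{0}\varrho^{q-p}$, so constants can only be required to be non-decreasing in $\snr{B}$ rather than genuinely scale invariant, consistent with the statement of the theorem.
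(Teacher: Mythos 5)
The paper does not actually prove Theorem \ref{T4}; it cites \cite[Section 5]{CM3} and merely records that the argument there needs no restriction on $q/p$. So there is no in-paper proof to compare against, and I will assess your outline on its own terms. Your $\gamma=1$ step is correct and standard (test with $v-w$, use the coercivity following from \rif{73}$_3$ together with $A_0(0)=0$, absorb via Young with exponents $(p,p')$ and $(q,q')$), and Caccioppoli plus intrinsic Sobolev--Poincar\'e does give a Gehring exponent $\eps_0>0$ with $a_0$-independent constants. Beyond that, however, two substantial ingredients are missing.

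First, Gehring alone cannot reach arbitrary $\gamma$. The good-$\lambda$/stopping-time bootstrap you invoke requires a \emph{pointwise} decay estimate for $A_0$-harmonic functions --- a Lieberman-type $L^\infty$-gradient (indeed $C^{1,\alpha_0}$) bound with constants depending only on $n,p,q,\nu,L$ and, crucially, not on $a_0$. That estimate is what makes the measure of $\{H_0(Dh)>\lambda\}$ on each exit-time ball decay geometrically; the $(1+\eps_0)$-integrability improvement from Gehring cannot play that role for $\gamma$ above $1+\eps_0$. You write that ``the higher-integrability just obtained, together with the hypothesis $H_0(Dw)\in L^\gamma$, supplies a good-$\lambda$ inequality,'' but this is precisely the step that does not follow: the a priori regularity input for the comparison function is never supplied, and it is exactly the point where $a_0$-uniformity is delicate.

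Second, the boundary is not treated, and yet \rif{74} is a global estimate on all of $B$. Your Caccioppoli/Gehring step is phrased on interior concentric balls only, so it yields interior higher integrability; the up-to-the-boundary version needs $\varphi=\eta^q(v-w)$ on balls touching $\partial B$ (the pattern of Lemma \ref{unbougeh} in this paper). More tellingly, your prescription ``compare $v$ with the $A_0$-harmonic function matching its boundary values'' is vacuous on an interior exit-time ball, since that function is $v$ itself; the comparison error is identically zero there, so $H_0(Dw)$ cannot enter through it. The nontrivial comparison occurs on boundary balls, where one takes the $A_0$-harmonic function with zero data on $\partial B$ and matching $v-w$ on the interior part of the exit ball; it is the boundary datum $Dw$ that produces the comparison error, and it is only through $M(H_0(Dw))$ in these boundary balls that the hypothesis $H_0(Dw)\in L^\gamma$ enters the good-$\lambda$ inequality. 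Your outline blurs interior and boundary and never exhibits this mechanism. As a side remark, the dilation $v(x)\mapsto \varrho^{-1}v(\varrho x)$ you mention leaves $Dv$, hence $H_0$ and $a_0$, unchanged; it is the non-normalised zoom $v(x)\mapsto v(\varrho x)$ (or an amplitude rescaling) that sends $a_0$ to $a_0\varrho^{q-p}$, so that particular justification of the $|B|$-dependence needs to be corrected.
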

The following approximation property extends the ones already exploited in \cite{CM1, ELM}. 
\begin{lemma}\label{nolav}
Under assumptions \trif{ass1} and \eqref{ass4}, if $v\in W^{1,H}(\Omega)$ is such that $H(\cdot,Dv)\in L^{1+\delta}_{\mathrm{loc}}(\Omega)$ for some $\delta\ge0$, then there exists a sequence of smooth functions $\{v_{k}\}_{k\in \N}\subset W^{1,\infty}_{\mathrm{loc}}(\Omega)$ such that, for all $B\Subset \Omega$ with radius $r$, $r\le 1$, there holds that
$$
v_{k}\to v \ \mathrm{strongly \ in \ }W^{1,p(1+\delta)}_{\mathrm{loc}}(\Omega) \quad \mathrm{and}\quad \int_{B}[H(x,Dv_{k})]^{1+\delta} \, dx\to\int_{B}[H(x,Dv)]^{1+\delta} \, dx\;. 
$$
\end{lemma}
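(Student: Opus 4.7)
The plan is to take $v_{k} := v * \rho_{k}$ with $\rho_{k}$ a standard nonnegative smooth mollifier of mass one supported in $B(0, 1/k)$. Since the assumption $H(\cdot,Dv)\in L^{1+\delta}_{\loc}(\Omega)$ yields both $|Dv|^{p(1+\delta)}\in L^{1}_{\loc}(\Omega)$ and $(a|Dv|^{q})^{1+\delta}\in L^{1}_{\loc}(\Omega)$, standard mollification theory immediately gives the strong convergence $v_{k}\to v$ in $W^{1,p(1+\delta)}_{\loc}(\Omega)$, and in particular $Dv_{k}\to Dv$ a.e.\ along a subsequence. Because $[H(x,z)]^{1+\delta}\simeq |z|^{p(1+\delta)}+(a(x)|z|^{q})^{1+\delta}$ and the $p$-part is already under control by the strong $L^{p(1+\delta)}$-convergence of $Dv_{k}$, a Radon--Riesz type argument reduces the energy convergence $\int_{B}[H(x,Dv_{k})]^{1+\delta}\, dx\to\int_{B}[H(x,Dv)]^{1+\delta}\, dx$ to the single statement $\int_{B}(a|Dv_{k}|^{q})^{1+\delta}\, dx\to \int_{B}(a|Dv|^{q})^{1+\delta}\, dx$. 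Fatou gives the $\liminf$-inequality for free, so the heart of the matter is the matching upper bound.

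The pointwise starting point is Jensen's inequality for the convex function $|\cdot|^{q}$, which yields $a(x)|Dv_{k}(x)|^{q}\le a(x)(|Dv|^{q}*\rho_{k})(x)$, to be combined with the H\"older bound $|a(x)-a(y)|\le [a]_{\alpha}k^{-\alpha}$ for $y\in B(x,1/k)$. I would then fix $\eps\in(0,1)$ and split the ball $B=B_{\eps}^{+}(k)\cup B_{\eps}^{-}(k)$, where $B_{\eps}^{+}(k):=\{x\in B : a(x)>[a]_{\alpha}/(\eps k^{\alpha})\}$. On $B_{\eps}^{+}(k)$ the coefficient $a$ is essentially constant at scale $1/k$, namely $a(y)\ge (1-\eps)a(x)$ on $B(x,1/k)$, hence $a(x)|Dv_{k}|^{q}\le (1-\eps)^{-1}((a|Dv|^{q})*\rho_{k})(x)$; raising to the power $(1+\delta)$, integrating and using $L^{1+\delta}$-mollification of the $L^{1+\delta}_{\loc}$-datum $a|Dv|^{q}$ gives
\[
\limsup_{k}\int_{B_{\eps}^{+}(k)}(a|Dv_{k}|^{q})^{1+\delta}\, dx\le (1-\eps)^{-(1+\delta)}\int_{B}(a|Dv|^{q})^{1+\delta}\, dx\;.
\]

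The key step, and the main obstacle, is the small-$a$ region $B_{\eps}^{-}(k)$, where $a(x)\le [a]_{\alpha}/(\eps k^{\alpha})$ and a crude use of $a(x)|Dv_{k}|^{q}\le k^{-\alpha}|Dv|^{q}*\rho_{k}$ would require $|Dv|^{q}\in L^{1}_{\loc}$, which is \emph{not} available at this level of generality. Here the borderline condition \eqref{ass4} is invoked through the standard mollification estimate $\|Dv_{k}\|_{L^{\infty}(B(x,1/k))}\le c k^{n/p}\|Dv\|_{L^{p}(B(x,2/k))}$: writing $|Dv_{k}|^{q}=|Dv_{k}|^{q-p}|Dv_{k}|^{p}$ and inserting this $L^{\infty}$-bound on the first factor yields
\[
a(x)|Dv_{k}(x)|^{q}\le c_{\eps}\, k^{-\alpha+n(q-p)/p}\,\|Dv\|_{L^{p}(B(x,2/k))}^{q-p}\,|Dv_{k}(x)|^{p}\qquad \text{on }B_{\eps}^{-}(k)\;,
\]
and the assumption $q/p\le 1+\alpha/n$ is precisely what makes the power of $k$ nonpositive, so that the smallness $a(x)\lesssim k^{-\alpha}$ exactly compensates the $L^{\infty}$ blow-up $k^{n(q-p)/p}$. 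Raising to $(1+\delta)$ and integrating, the factor $\|Dv\|_{L^{p}(B(x,2/k))}^{(q-p)(1+\delta)}$ tends to $0$ a.e.\ by Lebesgue differentiation, while $|Dv_{k}|^{p(1+\delta)}$ is dominated by $c\, M(Dv)^{p(1+\delta)}\in L^{1}$ via Hardy--Littlewood (possible since $p(1+\delta)>1$); dominated convergence then yields $\int_{B_{\eps}^{-}(k)}(a|Dv_{k}|^{q})^{1+\delta}\, dx\to 0$ as $k\to\infty$. Summing the two contributions and letting $\eps\to 0^{+}$ produces the sharp upper bound, which together with Fatou gives the required energy convergence and hence the lemma.
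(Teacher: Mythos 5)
Your proof is correct, but it takes a different route from the paper. The paper's argument is a short reduction: it defines $H_{\delta}(x,z):=(|z|^{p}+a(x)|z|^{q})^{1+\delta}\approx |z|^{p(1+\delta)}+[a(x)]^{1+\delta}|z|^{q(1+\delta)}$, observes that $[a(\cdot)]^{1+\delta}$ is still $C^{0,\alpha}$ and that the exponent ratio $q(1+\delta)/p(1+\delta)=q/p$ is unchanged (so \trif{ass4} still holds for the new integrand), and then invokes the known $\delta=0$ density results from \cite{CM1, ELM} as a black box for the modified double-phase function. You instead prove the statement directly by mollification: you split $B$ into the high-coefficient region $a(x)>[a]_{\alpha}/(\eps k^{\alpha})$, where $a$ is effectively frozen at scale $1/k$ and a double Jensen argument closes the estimate, and the low-coefficient region, where the $L^{\infty}$ mollification bound $\|Dv_{k}\|_{L^{\infty}(B(x,1/k))}\lesssim k^{n/p}\|Dv\|_{L^{p}(B(x,2/k))}$ together with $a(x)\lesssim k^{-\alpha}$ yields a prefactor $k^{n(q/p-1)-\alpha}\le 1$, with the remaining decay supplied by Lebesgue differentiation and domination via the Hardy--Littlewood maximal function. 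This is essentially the mechanism behind the cited results in \cite{CM1, ELM}, so your proof unpacks what the paper treats as a reference; in particular it makes completely explicit why the borderline inequality $q/p\le 1+\alpha/n$ is exactly the threshold for absence of the Lavrentiev phenomenon (at equality the $k$-power becomes neutral and the argument survives solely on the vanishing of the local $L^{p}$-mass). The paper's reduction is terser and more modular; your direct argument is more self-contained and more transparent about the role of \trif{ass4}. Two small points you should make explicit if you flesh this out: Jensen's inequality for $|Dv_{k}|^{q}$ is only applied on the high-coefficient region where $a$ is bounded below on $B(x,1/k)$ (so that $|Dv|^{q}\in L^{1}$ there, which one does \emph{not} have globally), and the final passage from the two partial energy convergences to convergence of $\int_{B}[H(x,Dv_{k})]^{1+\delta}\,dx$ should be spelled out via Scheff\'e/Vitali uniform integrability rather than a bare appeal to Radon--Riesz.
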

\begin{proof}
Let us define
$H_{\delta}(x,z):=\left(\snr{z}^{p}+a(x)\snr{z}^{q}\right)^{1+\delta}\approx \snr{z}^{p(1+\delta)}+[a(x)]^{1+\delta}\snr{z}^{q(1+\delta)}$. It is sufficient to show that
$$
v_{k}\to v \ \mathrm{strongly \ in \ }W^{1,p(1+\delta)}_{\mathrm{loc}}(\Omega) \quad \mathrm{and}\quad \int_{B}H_{\delta}(x,Dv_{k}) \, dx\to\int_{B}H_{\delta}(x,Dv) \, dx\;. 
$$
This is now a consequence of the arguments in \cite{CM1, ELM} since the function $x \mapsto [a(x)]^{1+\delta}$ is still $C^{0,\alpha}$-regular and the newly defined integrand $H_{\delta}(\cdot)$ satisfies the conditions detailed in \cite[Section 4]{CM1}, with $p$ and $q$ replaced by $p(1+\delta)$ and $q(1+\delta)$, respectively.  
\end{proof}
The next lemma has been proved in \cite[Proposition 3.1]{CM3} assuming \rif{ass3}, but for its proof the bound in \rif{ass4} is actually sufficient. 
\begin{lemma}\label{testing-prop} Under assumptions \trif{ass1} and \trif{ass4}, let $B\Subset \Omega$ be a ball and let $S\colon B\to \er^n$ be a measurable vector field such that $H(x, S)\in L^1(B)$ and which is a  
distributional solution to the equation $
-\divo\, T(x, S)=0$ in $B$. Here we assume that the vector field $T\colon B\times \er^n\to \er^n$ satisfies the growth conditions
$$
|T(x,z)| \lesssim |z|^{p-1}+a(x)|z|^{q-1}
$$
for every $x\in B$ and $z\in \er^n$. Then every $\varphi \in W^{1,1}_{0}(B)$ such that $H(x, D\varphi)\in L^1(B)$ satisfies
$$
\int_{B} \langle T(x,S), D\varphi \rangle \, dx =0 \;.
$$
\end{lemma}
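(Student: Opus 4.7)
The plan is to extend the distributional identity $\int_B\langle T(x,S),D\psi\rangle\,dx=0$, valid by assumption for every $\psi\in C^\infty_c(B)$, to the larger class of test functions prescribed in the statement by a smoothing-plus-cutoff argument that exploits the density properties underwritten by condition \rif{ass4}.

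First I would extend $\varphi$ by zero outside $B$ to $\tilde\varphi\in W^{1,1}(\Omega)$; this is legitimate because $\varphi\in W^{1,1}_0(B)$, and it preserves $H(\cdot,D\tilde\varphi)\in L^1(\Omega)$ since $D\tilde\varphi$ vanishes outside $B$. Applied to $\tilde\varphi$ on a slightly larger ball $B\Subset B'\Subset\Omega$, Lemma \ref{nolav} with $\delta=0$ produces smooth $\tilde\varphi_k$ with $\tilde\varphi_k\to\tilde\varphi$ in $W^{1,p}(B')$ and $\int_{B'}H(x,D\tilde\varphi_k)\,dx\to\int_{B'}H(x,D\tilde\varphi)\,dx$. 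Up to subsequences $D\tilde\varphi_k\to D\tilde\varphi$ a.e.; the elementary bound $H(x,D\tilde\varphi_k-D\tilde\varphi)\le c(H(x,D\tilde\varphi_k)+H(x,D\tilde\varphi))$, combined with the convergence of the dominating integrals, yields equiintegrability, so Vitali's theorem upgrades the a.e.\ convergence to the modular convergence $\int_{B'}H(x,D\tilde\varphi_k-D\tilde\varphi)\,dx\to 0$.

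Next I would pick a cutoff $\eta_\tau\in C^\infty_c(B)$ with $\eta_\tau\equiv 1$ on $B_{(1-\tau)r}$ and $|D\eta_\tau|\le c/(\tau r)$, so that $\eta_\tau\tilde\varphi_k\in C^\infty_c(B)$ is an admissible test function, giving
$$\int_B\langle T(x,S),\eta_\tau D\tilde\varphi_k+\tilde\varphi_k D\eta_\tau\rangle\,dx=0.$$
The growth bound on $T$ together with Young's inequality yields $|\langle T(x,S),D\psi\rangle|\le c(H(x,S)+H(x,D\psi))$; combined with the modular convergence of $D\tilde\varphi_k$ for the leading term and standard compactness in the Orlicz-Musielak sense for the lower-order term $\tilde\varphi_k D\eta_\tau$ (at fixed $\tau$), the passage $k\to\infty$ produces
$$\int_B\langle T(x,S),\eta_\tau D\tilde\varphi+\tilde\varphi D\eta_\tau\rangle\,dx=0.$$

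The main obstacle is the final limit $\tau\to 0$. The first term tends to $\int_B\langle T(x,S),D\varphi\rangle\,dx$ by dominated convergence, with $L^1$-majorant $c(H(x,S)+H(x,D\varphi))$ independent of $\tau$. The boundary piece, supported in the shell $A_\tau:=B\setminus B_{(1-\tau)r}$, is handled by a Young-type splitting
$$\left|\int_{A_\tau}\langle T(x,S),\tilde\varphi D\eta_\tau\rangle\,dx\right|\lesssim\int_{A_\tau}H(x,S)\,dx+\int_{A_\tau}H\bigl(x,\tilde\varphi/(\tau r)\bigr)\,dx.$$
The first summand vanishes as $\tau\to 0$ by absolute continuity of $\int H(\cdot,S)\,dx$, while the second is controlled by a Hardy-type estimate $\int_{A_\tau}H(x,\tilde\varphi/(\tau r))\,dx\lesssim\int_{A_\tau}H(x,D\varphi)\,dx$, whose right-hand side again vanishes by absolute continuity of $\int H(\cdot,D\varphi)\,dx$. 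Combining these two passages yields the claimed identity on the full class of admissible test functions.
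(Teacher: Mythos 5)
Your strategy of smoothing plus cutoff is natural, but the two parameters are \emph{decoupled} in your limiting argument (first $k\to\infty$ at fixed $\tau$, then $\tau\to 0$), and this is exactly what makes the final step fail. The sticking point is the claimed ``Hardy-type estimate''
$$
\int_{A_\tau}H\bigl(x,\tilde\varphi/(\tau r)\bigr)\,dx\ \lesssim\ \int_{A_\tau}H(x,D\varphi)\,dx ,
$$
with a constant independent of $\tau$. For the $p$-part this is fine: the localized unweighted $L^p$ Hardy inequality $\int_{A_\tau}|\tilde\varphi/(\tau r)|^p\,dx\lesssim\int_{A_\tau}|D\tilde\varphi|^p\,dx$ does hold and the right-hand side vanishes by absolute continuity. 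But for the $q$-part you need
$$
\int_{A_\tau} a(x)\,\Bigl|\frac{\tilde\varphi}{\tau r}\Bigr|^q\,dx\ \lesssim\ \int_{A_\tau}a(x)\,|D\tilde\varphi|^q\,dx ,
$$
which is a \emph{weighted} $L^q$ Hardy inequality with the merely nonnegative, merely H\"older-continuous weight $a(\cdot)$, and this is not a standard fact. One cannot replace $a$ by $\|a\|_\infty$ on the left either, because then the right-hand side would require $D\varphi\in L^q$, whereas the only available information is $D\varphi\in L^p$ together with $a(\cdot)^{1/q}D\varphi\in L^q$. Trying to push the weight through the radial Hardy argument produces an error term proportional to $\tau^\alpha\int_{A_\tau}|D\varphi|^q\,dx$, which again is not controlled, since the raw $D\varphi$ carries no $L^\infty$ or $L^q$ bound. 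This is precisely where the condition $q/p\le 1+\alpha/n$ is supposed to enter, and in the approach taken in the literature (the proof the paper cites from \cite[Proposition~3.1]{CM3}, built on the density arguments of \cite{CM1,ELM}) this is done by \emph{coupling} the mollification radius $\varepsilon$ with the cutoff scale $\tau$: after mollification one has the scale-dependent bound $\|D\varphi_\varepsilon\|_\infty\lesssim \varepsilon^{-n/p}\|D\varphi\|_{L^p}$, and only then does the factor $\tau^\alpha|D\varphi_\varepsilon|^{q-p}\lesssim\varepsilon^{\alpha-n(q-p)/p}$ stay bounded (which is exactly equivalent to $q/p\le 1+\alpha/n$). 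Once you pass $k\to\infty$ first, this $L^\infty$-control is lost and the boundary layer cannot be absorbed. So the argument needs to be rebuilt so that the compactly supported smooth approximants $\varphi_k\in C^\infty_c(B)$ with $\int_B H(x,D\varphi_k-D\varphi)\,dx\to 0$ are produced in one shot (by a combined dilation/cutoff/mollification construction), after which the passage to the limit in the weak formulation is immediate.

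A secondary remark: your passage $k\to\infty$ in the lower-order term $\tilde\varphi_kD\eta_\tau$ silently uses that $\tilde\varphi_k\to\tilde\varphi$ strongly in $L^q(B)$. This does follow from the $W^{1,p}$ convergence via Sobolev embedding and interpolation because $q<p^*$ under \trif{ass4}, but it should be spelled out rather than waved through as ``standard Orlicz--Musielak compactness''.
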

We conclude with a classical iteration lemma (see \cite[Chapter 6]{G} for a proof). 
\begin{lemma}\label{iter}B
Let $h\colon [\varrho_{0},\varrho_{1}]\to \mathbb{R}$ be a nonnegative and bounded function, and let $\theta \in (0,1)$ and $A,B\ge 0$, $\gamma_{1},\gamma_{2}\ge 0$ be numbers. Assume that
\begin{flalign*}
h(t)\le \theta h(s)+\frac{A}{(s-t)^{\gamma_{1}}}+\frac{B}{(s-t)^{\gamma_{2}}}
\end{flalign*}
holds for $\varrho_{0}\le t<s\le \varrho_{1}$. Then the following inequality holds with $c\equiv c(\theta,\gamma_{1},\gamma_{2})$:
\begin{flalign*}
h(\varrho_{0})\le \frac{cA}{(\varrho_{1}-\varrho_{0})^{\gamma_{1}}}+\frac{cB}{(\varrho_{1}-\varrho_{0})^{\gamma_{2}}}\;.
\end{flalign*}
\end{lemma}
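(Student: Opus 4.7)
The plan is to prove this by a standard geometric iteration along a telescoping sequence of radii shrinking from $\varrho_1$ toward $\varrho_0$, chosen so that the contraction factor $\theta$ beats the blow-up of the $(s-t)^{-\gamma_j}$ singularities.

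First I would pick a parameter $\tau\in(0,1)$, to be fixed momentarily, and define the increasing sequence
\[
t_0:=\varrho_0,\qquad t_{i+1}:=t_i+(1-\tau)\tau^i(\varrho_1-\varrho_0),\qquad i\in\mathbb{N}.
\]
Then $t_i\nearrow\varrho_1$, $t_{i+1}-t_i=(1-\tau)\tau^i(\varrho_1-\varrho_0)$, and each $t_i\in[\varrho_0,\varrho_1]$. Applying the hypothesis to the pair $(t_i,t_{i+1})$ gives
\[
h(t_i)\le\theta\,h(t_{i+1})+\frac{A}{[(1-\tau)\tau^i(\varrho_1-\varrho_0)]^{\gamma_1}}+\frac{B}{[(1-\tau)\tau^i(\varrho_1-\varrho_0)]^{\gamma_2}}.
\]

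Next I would iterate this inequality $k$ times starting from $i=0$, collecting the telescoped contribution. A straightforward induction yields
\[
h(\varrho_0)\le \theta^{k}h(t_k)+\frac{A}{(1-\tau)^{\gamma_1}(\varrho_1-\varrho_0)^{\gamma_1}}\sum_{i=0}^{k-1}(\theta\tau^{-\gamma_1})^{i}+\frac{B}{(1-\tau)^{\gamma_2}(\varrho_1-\varrho_0)^{\gamma_2}}\sum_{i=0}^{k-1}(\theta\tau^{-\gamma_2})^{i}.
\]
At this point I would fix $\tau$ close enough to $1$ so that $\theta\tau^{-\gamma_1}<1$ and $\theta\tau^{-\gamma_2}<1$ simultaneously; concretely, any $\tau\in\bigl(\theta^{1/(1+\max\{\gamma_1,\gamma_2\})},1\bigr)$ works, and this choice depends only on $\theta,\gamma_1,\gamma_2$. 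With such a $\tau$, both geometric series are bounded by a constant $c=c(\theta,\gamma_1,\gamma_2)$.

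Finally, I would let $k\to\infty$. Since $h$ is bounded on $[\varrho_0,\varrho_1]$ by hypothesis and $\theta\in(0,1)$, the remainder term $\theta^{k}h(t_k)$ vanishes in the limit, and the two partial sums converge to finite constants depending only on $\theta,\gamma_1,\gamma_2$. Absorbing $(1-\tau)^{-\gamma_j}$ into the constant $c$ yields exactly
\[
h(\varrho_0)\le\frac{cA}{(\varrho_1-\varrho_0)^{\gamma_1}}+\frac{cB}{(\varrho_1-\varrho_0)^{\gamma_2}}.
\]
There is no real obstacle here; the only delicate point is the calibration of $\tau$ to make both geometric series simultaneously convergent, and the crucial (otherwise innocuous) use of the a priori boundedness of $h$ to discard the tail term $\theta^{k}h(t_k)$.
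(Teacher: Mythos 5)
Your proof is correct and is precisely the classical argument the paper defers to (it cites \cite[Chapter 6]{G} rather than proving the lemma): a geometric sequence of radii, iteration, calibration of $\tau$ so that $\theta\tau^{-\gamma_j}<1$ for both exponents, and the boundedness of $h$ to discard the tail $\theta^k h(t_k)$. Nothing to add.
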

\section{Higher integrability estimates}\label{higher-sec-1} In this section we fix a ball $B$, with radius $r>0$, such that $B\Subset \Omega$ and $r\leq 1$, and we provide a few existence results and regularity estimates for solutions $w\in W^{1,H}(B)$ to Dirichlet boundary value problems of the type
\begin{flalign}\label{p0}
\begin{cases}
\ -\diver A(x,Dw)=0 \ \ \mathrm{in} \ B\\
\ w\in w_0+W^{1,H}_{0}(B)\,,
\end{cases}
\end{flalign}
where $w_0\in W^{1,H}(B)$ is given boundary datum such that
\eqn{maggiore-inizia}
$$
H(x, Dw_0) \in L^{1+\delta}(B) \qquad \mbox{for some}\ \delta>0
$$
and 
\eqn{elleuno}
$$
\|H(\cdot, Dw_0)\|_{L^{1}(B)}\leq L_1 \;,
$$
for some finite constant $L_1\geq 0$. Needless to say, in \rif{p0} and for the rest of the section, the vector $A(\cdot)$ satisfies the assumptions of Theorem \ref{cz}, that is \rif{ass1}, \rif{ass4} and \rif{assa}. We start introducing the perturbed vector fields $A_{k}\colon \Omega \times \er^n \to \er^n$ as
\eqn{perty}
$$
A_{k}(x,z):= A(x,z)+\eps_{k}\snr{z}^{s-2}z\;,
$$ 
where $s>q$ and $\{\eps_{k}\}$ is a sequence of positive numbers such that $\eps_{k}\to 0$. Moreover, with $B\Subset \Omega$, let us consider a sequence $\{\tilde{w}_{k}\}\subset W^{1,\infty}(B)$ such that 
\eqn{conve2}
$$
\int_{B}H(x,D\tilde{w}_{k}) \, dx\to\int_{B}H(x,Dw_0) \, dx\quad \mbox{and} \quad 
\int_{B}[H(x,D\tilde{w}_{k})]^{1+\delta} \, dx\to\int_{B}[H(x,Dw_0)]^{1+\delta} \, dx\;.
$$
The existence of such a sequence is ensured by Lemma \ref{nolav} and by \rif{maggiore-inizia}. Beside the functions $\tilde{w}_{k}$, we consider another sequence $\{w_k\}\subset W^{1,s}(B)$ in such a way that for every $k \in \en$, $w_{k}$ is the unique solution to the Dirichlet problem
\begin{flalign}\label{10}
\begin{cases}
\ -\diver A_{k}(x,Dw_{k})=0 \ \ \mathrm{in} \ B\\
\ w_{k} \in \tilde{w}_{k}+W^{1,s}_{0}(B)\;.
\end{cases}
\end{flalign}
We start with a first higher integrability result extending those originally found in \cite{CM1, CM2}. 
\begin{lemma}\label{unintgeh}
Under assumptions \trif{ass1}, \trif{ass4} and \trif{assa}, assume also that 
\eqn{uniform-bound}
$$\sup_{k \in \N}\, \nr{H(\cdot, Dw_{k})}_{L^{1}(B)}\le \tilde{c}$$ holds for a positive constant $\tilde{c}$. 
Then there exist two positive constants $\delta_1>0$ and $c$, both depending on $\tilde{c},n,p,q,\nu,L,s,[a]_{0,\alpha}$ and $\alpha$, but otherwise independent of $k$, such that
\begin{flalign}\label{9}
\left(\mean{B_{\varrho}}[H(x,Dw_{k})+\eps_{k}\snr{Dw_{k}}^{s}]^{1+\delta_1} \, dx\right)^{\frac{1}{1+\delta_1}}\le c\mean{B_{2\varrho}}H(x,Dw_{k})+\eps_{k}\snr{Dw_{k}}^{s} \, dx\;.
\end{flalign}
holds for every ball $B_{2\varrho}\subset B$ and every $k\in \en$. 
\end{lemma}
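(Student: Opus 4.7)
The plan is to deduce \eqref{9} by establishing a reverse H\"older inequality via the classical Caccioppoli plus Sobolev--Poincar\'e scheme, and then invoking Gehring's lemma; the key point is to ensure every constant is independent of $\varepsilon_{k}$ and of $k$.

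First I would derive a Caccioppoli-type estimate for $w_{k}$. Fix $B_{2\varrho}\subset B$ and concentric balls $B_{t}\subset B_{\tau}\subset B_{2\varrho}$ with $\varrho\le t<\tau\le 2\varrho$. Choose a cut-off $\eta\in C^{1}_{c}(B_{\tau})$ with $\eta\equiv 1$ on $B_{t}$, $0\le\eta\le 1$ and $\snr{D\eta}\le c/(\tau-t)$, then test \eqref{10} with $\varphi:=(w_{k}-(w_{k})_{B_{2\varrho}})\eta^{s}\in W^{1,s}_{0}(B)$. Using the coercivity from \eqref{monotonicity} for $A$ together with the trivial identity $\langle \varepsilon_{k}\snr{z}^{s-2}z,z\rangle =\varepsilon_{k}\snr{z}^{s}$, and estimating the boundary term by the growth bound in \eqref{assa}$_{2}$ and by $\snr{\varepsilon_{k}\snr{Dw_{k}}^{s-2}Dw_{k}}\le \varepsilon_{k}\snr{Dw_{k}}^{s-1}$, a triple application of Young's inequality (with exponents $p/(p-1)$, $q/(q-1)$, $s/(s-1)$) followed by Lemma \ref{iter} to reabsorb the gradient terms yields
\[
\int_{B_{\varrho}}\!\!H(x,Dw_{k})+\varepsilon_{k}\snr{Dw_{k}}^{s}\,dx
\le c\!\int_{B_{2\varrho}}\!\!H\!\left(x,\frac{w_{k}-(w_{k})_{B_{2\varrho}}}{\varrho}\right)+\varepsilon_{k}\!\left|\frac{w_{k}-(w_{k})_{B_{2\varrho}}}{\varrho}\right|^{s}\!dx\,,
\]
with $c$ independent of $k$.

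Next I would apply a Sobolev--Poincar\'e type inequality adapted to the double phase setting, which is precisely where assumption \eqref{ass4} enters. For the $p$-piece one uses the classical $(p_{\ast},p)$-Poincar\'e inequality, and similarly an $(s_{\ast},s)$-Poincar\'e inequality for the $\varepsilon_{k}$-regularization. For the coefficient piece one decomposes $a(x)\le\inf_{B_{2\varrho}}a+[a]_{0,\alpha}(2\varrho)^{\alpha}$: the constant part is absorbed using the $q$-Sobolev inequality (with $a_{0}:=\inf_{B_{2\varrho}}a$ frozen), while the oscillation part is controlled by interpolating $\varrho^{\alpha}\int\snr{\cdot}^{q}$ against $\int\snr{D\cdot}^{p}$ using Gagliardo--Nirenberg; here the inequality $q\le p(1+\alpha/n)$ is exactly what makes the scaling in $\varrho$ match. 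The outcome is the existence of $\theta_{0}=\theta_{0}(n,p,q,\alpha)<1$ such that
\[
\mean{B_{2\varrho}}H\!\left(x,\frac{w_{k}-(w_{k})_{B_{2\varrho}}}{\varrho}\right)+\varepsilon_{k}\!\left|\frac{w_{k}-(w_{k})_{B_{2\varrho}}}{\varrho}\right|^{s}\!dx
\le c\bigl(\mean{B_{2\varrho}}[H(x,Dw_{k})+\varepsilon_{k}\snr{Dw_{k}}^{s}]^{\theta_{0}}\,dx\bigr)^{1/\theta_{0}}\,,
\]
with a constant depending on $\tilde c$, $n$, $p$, $q$, $\alpha$, $[a]_{0,\alpha}$, $\|a\|_{L^{\infty}}$ but not on $k$. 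Chaining this with the Caccioppoli inequality produces a reverse H\"older inequality of the form $\mean{B_{\varrho}}F_{k}\,dx\le c(\mean{B_{2\varrho}}F_{k}^{\theta_{0}}\,dx)^{1/\theta_{0}}$ for $F_{k}:=H(\cdot,Dw_{k})+\varepsilon_{k}\snr{Dw_{k}}^{s}$.

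Finally, Gehring's lemma applied to this reverse H\"older inequality produces the exponent $\delta_{1}>0$ together with the estimate \eqref{9}, depending on the data through the constant in the reverse H\"older inequality but not on $k$ nor on $\varepsilon_{k}$. The main obstacle I expect is the Sobolev--Poincar\'e step at the borderline: when $q/p=1+\alpha/n$ one cannot afford any loss in the Gagliardo--Nirenberg interpolation, and the oscillation term $[a]_{0,\alpha}\varrho^{\alpha}$ must be distributed \emph{exactly} so as to balance the mismatch between the $p$- and $q$-scales. A secondary technical point is to verify, using the \emph{a priori} bound \eqref{uniform-bound}, that the intermediate constants remain uniform when $\varepsilon_{k}\to 0$, which is automatic provided the Caccioppoli and Sobolev--Poincar\'e constants above never involve $\varepsilon_{k}$, as is the case in the scheme outlined.
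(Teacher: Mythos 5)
Your proposal is essentially the same as the paper's proof: test with $\varphi=\eta^{s}(w_{k}-(w_{k})_{B_{2\varrho}})$, use the monotonicity \eqref{monotonicity} and Young's inequality (tracking the role of $s>q$ so that the cutoff powers reabsorb), then apply a double-phase intrinsic Sobolev--Poincar\'e inequality to obtain the reverse H\"older inequality for $H(\cdot,Dw_k)+\varepsilon_k|Dw_k|^s$, and conclude by Gehring's lemma. The only cosmetic differences are that the paper absorbs directly with a fixed cutoff between $B_\varrho$ and $B_{2\varrho}$ rather than invoking Lemma \ref{iter} over nested balls, cites the Sobolev--Poincar\'e inequality from \cite{CM1, OK2} rather than sketching its proof, and records the Sobolev--Poincar\'e constant as depending on $\tilde c$, $[a]_{0,\alpha}$ (via \eqref{uniform-bound}) rather than on $\|a\|_{L^\infty}$; none of these change the argument.
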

\begin{proof} The proof combines a suitable Caccioppoli type estimate with the  Sobolev-Poincar\'e type inequalities obtained in \cite{CM1, CM2, OK2}. 
Let $B_{2\varrho}\Subset B$ be a ball and $  \eta \in C^{1}_{c}(B_{2\varrho})$ be so that $\chi_{B_{\varrho}}\le   \eta \le \chi_{B_{2\varrho}}$ and $\snr{D  \eta}\le 4\varrho^{-1}$. We test the weak formulation of \eqref{10} against $\varphi=  \eta^{s}(w_{k}-(w_{k})_{B_{2\varrho}})$, which is admissible since $w_{k}\in W^{1,s}(B)$, ($s>q$) for all $k \in \N$. By means of $\eqref{monotonicity}$, Young inequality, and recalling that 
$$s<\frac{q(s-1)}{q-1}< \frac{p(s-1)}{p-1}\;,$$ we obtain, for $\eps \in (0,1)$
\begin{flalign*}
& \int_{B_{2\varrho}}  \eta^{s}\left[H(x,Dw_{k})+\eps_{k}\snr{Dw_{k}}^{s}\right] \, dx \\ & \quad  \leq c\int_{B_{2\varrho}}  \eta^{s-1}\left(\snr{Dw_{k}}^{p-1}+a(x)\snr{Dw_{k}}^{q-1}+\eps_{k}\snr{Dw_{k}}^{s-1} \right)\left |\frac{w_{k}-(w_{k})_{B_{2\varrho}}}{\varrho} \right | \, dx\\
& \quad  \le  \varepsilon \int_{B_{2\varrho}}  \eta^{s}\left[H(x,Dw_{k})+\eps_{k}\snr{Dw_{k}}^{s}\right] \, dx\\
&\qquad +c_\varepsilon\int_{B_{2\varrho}}H\left(x,\frac{w_{k}-(w_{k})_{B_{2\varrho}}}{\varrho}\right)+\eps_{k}\left |\frac{w_{k}-(w_{k})_{B_{2\varrho}}}{\varrho} \right |^{s} \, dx\;.
\end{flalign*}
Choosing $\varepsilon$ small enough and reabsorbing terms, we can conclude that
\begin{flalign}\label{11}
\mean{B_{\varrho}}H(x,Dw_{k})+\eps_{k}\snr{Dw_{k}}^{s} \, dx\le c\mean{B_{2\varrho}}H\left(x,\frac{w_{k}-(w_{k})_{B_{2\varrho}}}{\varrho}\right)+\eps_{k}\left |\frac{w_{k}-(w_{k})_{B_{2\varrho}}}{\varrho} \right |^{s} \, dx
\end{flalign}
holds for $c\equiv c(n,p,q,\nu,L,s)$. We can then use the intrinsic Sobolev-Poincar\'e's inequality developed in \cite{CM1, OK1}, to get 
\begin{flalign}\label{12}
\mean{B_{2\varrho}}H\left(x,\frac{w_{k}-(w_{k})_{B_{2\varrho}}}{\varrho}\right) \, dx \le c\left(\mean{B_{2\varrho}}[H(x,Dw_{k})]^{d} \, dx\right)^{1/d}\;,
\end{flalign}
with $c\equiv c(n,p,q,\alpha,[a]_{0,\alpha},\tilde{c})\geq 1 $ and $d\equiv d(n,p,q,\alpha,[a]_{0,\alpha})< 1 $. Moreover, defining $s_{*}=\max\left\{1,\frac{ns}{n+s}\right\}$, we obtain
\begin{flalign}\label{13}
\mean{B_{2\varrho}}\eps_{k}\left |\frac{w_{k}-(w_{k})_{B_{2\varrho}}}{\varrho} \right |^{s} \, dx\le c\left(\mean{B_{2\varrho}}(\eps_{k}\snr{Dw_{k}}^{s})^{s_{*}/s} \, dx\right)^{s/s_{*}}\;,
\end{flalign}
with $c\equiv c(n,s)$. Let $\tilde{d}:=\max\{s_{*}/s,d\}<1$ and combine \eqref{12} and \eqref{13} with H\"older's inequality to obtain
\begin{flalign}\label{14}
\mean{B_{2\varrho}}H\left(x,\frac{w_{k}-(w_{k})_{B_{2\varrho}}}{\varrho}\right)+\eps_{k}\left |\frac{w_{k}-(w_{k})_{B_{2\varrho}}}{\varrho} \right |^{s}\le c\left(\mean{B_{2\varrho}}[H(x,Dw_{k})+\eps_{k}\snr{Dw_{k}}^{s}]^{\tilde{d}} \, dx \right)^{1/\tilde{d}}\;,
\end{flalign}
where $c\equiv c(n,p,q,\nu,L,s,[a]_{0,\alpha},\alpha,\tilde{c})$ and $\tilde{d}=\tilde{d}(n,p,q,s)$. From \eqref{11} and \eqref{14} we obtain
\eqn{getting}
$$
\mean{B_{\varrho}}H(x,Dw_{k})+\eps_{k}\snr{Dw_{k}}^{s} \, dx \le c\left(\mean{B_{2\varrho}}[H(x,Dw_{k})+\eps_{k}\snr{Dw_{k}}^{s}]^{\tilde{d}} \, dx \right)^{1/\tilde{d}}\;,
$$
so we can apply a standard variant of Gehring's lemma to conclude with the statement.
\end{proof}
We then obtain a global version of the above result. \begin{lemma} \label{unbougeh}
Under the assumptions \trif{ass1}, \trif{ass4}, \trif{assa}, \trif{maggiore-inizia}, \trif{elleuno}, \trif{uniform-bound}, assume also that 
\eqn{uniform-bound-g}
$$\sup_{k \in \N}\, \nr{H(\cdot, D\tw_{k})}_{L^{1}(B)}\le \bar{c}$$ holds for a positive constant $\bar{c}$. Then there exist a positive exponent $\delta_2\leq \delta_1$, and constant $c$, both depending only on $ \tilde{c},\bar{c}, n,p,q,\alpha,\nu,L,[a]_{0,\alpha}$, such that 
\begin{flalign*}
\left(\mean{B}[H(x,Dw_{k})+\eps_{k}\snr{Dw_{k}}^{s}]^{1+\sigma} \, dx\right)^{\frac{1}{1+\sigma}}\le c\left(\mean{B}[H(x,D\tilde{w}_{k})+\eps_{k}\snr{D\tilde{w}_{k}}^{s}]^{1+\sigma} \, dx\right)^{\frac{1}{1+\sigma}}
\end{flalign*}
holds whenever $\sigma \in [0, \delta_2)$, for every $k\in \en$. 
\end{lemma}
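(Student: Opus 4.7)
The plan is to establish a reverse Hölder inequality up to the boundary of $B$ and then apply a boundary version of Gehring's lemma, cf. \cite[Chapter 6]{G}, together with a global energy comparison that absorbs the average of $\tilde H_k(\cdot,Dw_k)$ appearing on the right-hand side of the Gehring output.

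Set $\tilde H_k(x,z):=H(x,z)+\eps_k|z|^s$. As a preliminary step, testing \eqref{10} against the admissible map $w_k-\tilde w_k\in W^{1,s}_0(B)$ and combining \eqref{monotonicity} with Young's inequality gives the global energy comparison
\begin{flalign*}
\mean{B}\tilde H_k(x,Dw_k)\, dx\le c\mean{B}\tilde H_k(x,D\tilde w_k)\, dx\;.
\end{flalign*}
Next, fix a ball $B_{2\varrho}$ of radius $\le r$ centred at some $x_0\in\bar B$. If $B_{2\varrho}\Subset B$, Lemma \ref{unintgeh} supplies the interior reverse Hölder inequality. Otherwise, extend $w_k-\tilde w_k$ by zero to $\er^n$, choose a cutoff $\eta\in C^1_c(B_{2\varrho})$ with $\chi_{B_\varrho}\le\eta\le\chi_{B_{2\varrho}}$, $|D\eta|\le 4/\varrho$, and test \eqref{10} against $\eta^s(w_k-\tilde w_k)\in W^{1,s}_0(B)$. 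Caccioppoli manipulations modelled on those of Lemma \ref{unintgeh} yield
\begin{flalign*}
\mean{B_\varrho}\chi_B\tilde H_k(x,Dw_k)\, dx\le c\mean{B_{2\varrho}}\chi_B\tilde H_k(x,D\tilde w_k)\, dx+c\mean{B_{2\varrho}}\tilde H_k\!\left(x,\frac{w_k-\tilde w_k}{\varrho}\right)dx\;,
\end{flalign*}
the last integrand being understood to vanish outside $B$.

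For that term, the fact that $w_k-\tilde w_k$ vanishes on $B_{2\varrho}\setminus B$ allows the use of a zero-boundary variant of the intrinsic Sobolev-Poincaré inequality of \cite{CM1, OK1}, coupled with \eqref{13} for the $\eps_k|\cdot|^s$-block, yielding
\begin{flalign*}
\mean{B_{2\varrho}}\tilde H_k\!\left(x,\frac{w_k-\tilde w_k}{\varrho}\right)dx\le c\left(\mean{B_{2\varrho}}\chi_B[\tilde H_k(x,Dw_k)+\tilde H_k(x,D\tilde w_k)]^{\tilde d}\, dx\right)^{1/\tilde d}\;,
\end{flalign*}
with $\tilde d\equiv\tilde d(n,p,q,s)<1$, uniformly in $k$, under the borderline condition \eqref{ass4}. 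Merging with the Caccioppoli bound gives the reverse Hölder inequality up to the boundary
\begin{flalign*}
\mean{B_\varrho}\chi_B\tilde H_k(x,Dw_k)\, dx\le c\left(\mean{B_{2\varrho}}\chi_B[\tilde H_k(x,Dw_k)]^{\tilde d}\, dx\right)^{1/\tilde d}+c\mean{B_{2\varrho}}\chi_B\tilde H_k(x,D\tilde w_k)\, dx\;.
\end{flalign*}
A standard boundary version of Gehring's lemma, applied to $\chi_B\tilde H_k(\cdot,Dw_k)$ with forcing $\chi_B\tilde H_k(\cdot,D\tilde w_k)$, then produces a self-improving exponent $\delta_2\le\delta_1$ depending only on the data listed in the statement. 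The resulting bound carries an extra term of the form $\mean{B}\tilde H_k(x,Dw_k)\, dx$ on the right-hand side, which is absorbed by the global energy comparison above together with Hölder's inequality.

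The main obstacle will be matching the intrinsic Sobolev-Poincaré inequality of \cite{CM1, OK1} to the boundary setting with the zero extension of $w_k-\tilde w_k$ straddling $\partial B$, while keeping the constants uniform in $k$ and uniformly covering the perturbation $\eps_k|z|^s$; once this is checked, the boundary Gehring step and the absorption of the low-order average are routine.
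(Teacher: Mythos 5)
Your proposal is correct and follows essentially the same route as the paper: a global energy comparison obtained by testing with $w_k-\tilde w_k$, a boundary Caccioppoli estimate from testing with $\eta^s(w_k-\tilde w_k)$, a Sobolev--Poincar\'e inequality applied to $w_k-\tilde w_k$ (which vanishes on $\partial B$ and so can be extended by zero), a covering of interior and boundary balls into a single reverse H\"older inequality for the zero-extended quantities, and finally Gehring plus the initial global comparison to absorb the leftover low-order average. The "main obstacle" you flag at the end is in fact resolved exactly as you anticipate, since the paper applies the intrinsic Sobolev--Poincar\'e inequality of \cite{CM1,OK2} to the difference $w_k-\tilde w_k$, bounding $H(x,(w_k-\tilde w_k)/\varrho)$ by a sub-linear average of $H(x,Dw_k-D\tilde w_k)$ on $B\cap B_{2\varrho}$ and then splitting via doubling and H\"older; the $\eps_k|\cdot|^s$ block is handled exactly as in \eqref{13}.
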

\begin{proof}
We first test \eqref{10} against $\phi:=w_{k}-\tilde{w}_{k}$ to obtain by $\eqref{monotonicity}$ (arguing for instance as in (see again \cite[Theorem 3.1]{CM3}))
\begin{flalign}\label{15}
\int_{B}H(x,Dw_{k})+\eps_{k}\snr{Dw_{k}}^{s} \, dx \le c\int_{B}H(x,D\tilde{w}_{k})+\eps_{k}\snr{D\tilde{w}_{k}}^{s} \, dx\;,
\end{flalign}
with $c\equiv c(n,p,q,\nu,L,s)$. Now, for $x_{0}\in B$, let $B_{2\varrho}(x_{0})\subset \mathbb{R}^{n}$ be any ball such that $\snr{B_{2\varrho}(x_{0})\setminus B}>\snr{B_{2\varrho}(x_{0})}/10$ and let $\varphi:= \eta^{s}(w_{k}-\tilde{w}_{k})$, with $  \eta\in C^{1}_{c}(B_{2\varrho})$ so that $\chi_{B_{\varrho}}\le   \eta \le \chi_{B_{2\varrho}}$ and $\snr{D  \eta}\le 4\varrho^{-1}$. Notice that $\varphi$ is admissible for testing and $\supp \varphi \subset B\cap B_{2\varrho}(x_{0})$. From the weak formulation of \eqref{10} and \rif{monotonicity}, we obtain
\begin{flalign}\label{16}
\int_{B\cap B_{2\varrho}(x_{0})} \eta^{s}[H(x,Dw_{k})+\eps_{k}\snr{Dw_{k}}^{s}] \, dx & \le c\int_{B\cap B_{2\varrho}(x_{0})}  \eta^{s-1}\snr{A_{k}(x,Dw_{k})}\left |\frac{w_{k}-\tilde{w}_{k}}{\varrho} \right | \, dx\nonumber \\
& \quad +c\int_{B\cap B_{2\varrho}(x_{0})} \eta^{s}\snr{A_{k}(x,Dw_{k})}\snr{D\tilde{w}_{k}} \, dx=:\mathrm{(I)}+\mathrm{(II)}\;,
\end{flalign}
with $c\equiv c(n,p,q,\nu,L,s)$. Again Young inequality gives, for any $\eps \in (0,1)$
\begin{flalign*}
\snr{\mathrm{(I)}}\le \varepsilon \int_{B\cap B_{2\varrho}(x_{0})} \eta^{s}[H(x,Dw_{k})+\eps_{k}\snr{Dw_{k}}^{s}] \, dx + c_\varepsilon\int_{B\cap B_{2\varrho}(x_{0})}H\left(x,\frac{w_{k}-\tilde{w}_{k}}{\varrho}\right)+\eps_{k}\left |\frac{w_{k}-\tilde{w}_{k}}{\varrho} \right |^{s} \, dx\;\;,
\end{flalign*}
where $c_{\eps}\equiv c_{\eps}(n,p,q,\nu,L,s, \eps)$ and, similarly, 
\begin{flalign*}
\snr{\mathrm{(II)}}& \le \varepsilon\int_{B\cap B_{2\varrho}(x_{0})} \eta^{s}[H(x,Dw_{k})+\eps_{k}\snr{Dw_{k}}^{s}] \, dx+c_\varepsilon\int_{B\cap B_{2\varrho}(x_{0})}H(x,D\tilde{w}_{k})+\eps_{k}\snr{D\tilde{w}_{k}}^{s} \, dx\;\;,
\end{flalign*}
where $c_\eps\equiv c_\eps(n,p,q,\nu,L,s, \eps)$. Taking $\eps$ small enough and inserting the content of the last two displays in \eqref{16}, we obtain
\begin{flalign*}
\int_{B\cap B_{\varrho}(x_{0})}H(x,Dw_{k})+\eps_{k}\snr{Dw_{k}}^{s} \, dx &\le  c\int_{B\cap B_{2\varrho}(x_{0})}H\left(x,\frac{w_{k}-\tilde{w}_{k}}{\varrho}\right)+\eps_{k}\left |\frac{w_{k}-\tilde{w}_{k}}{\varrho} \right |^{s} \, dx\\
&\quad + c\int_{B\cap B_{2\varrho}(x_{0})}H(x,D\tilde{w}_{k})+\eps_{k}\snr{D\tilde{w}_{k}}^{s} \, dx\;.
\end{flalign*}
Applying Sobolev-Poincar\'e's inequality as follows
\begin{flalign*}
\mean{B\cap B_{2\varrho}(x_{0})}H\left(x,\frac{w_{k}-\tilde{w}_{k}}{\varrho}\right) \, dx  & \le c\left(\mean{B\cap B_{2\varrho}(x_{0})}[H(x,Dw_{k}-D\tilde{w}_{k})]^{d} \, dx\right)^{1/d}\\
& \le c\left(\mean{B\cap B_{2\varrho}(x_{0})}[H(x,Dw_{k})]^{d} \, dx\right)^{1/d} +  c \mean{B\cap B_{2\varrho}(x_{0})} H(x,D\tilde{w}_{k}) \, dx
\;,
\end{flalign*}
with $c, d$ as in \rif{12}
(this holds with the proof given in \cite{CM1, OK2}) and additionally depending on constant $\bar c$ appearing , after standard manipulations as in the proof of Lemma \ref{unintgeh}, we get
\begin{flalign*}
\mean{B\cap B_{\varrho}(x_{0})}H(x,Dw_{k})+\eps_{k}\snr{Dw_{k}}^{s} \, dx & \le c\left(\mean{B\cap B_{2\varrho}(x_{0})}[H(x,Dw_{k})+\eps_{k}\snr{Dw_{k}}^{s}]^{\tilde{d}} \, dx\right)^{1/\tilde{d}}\nonumber \\ &\quad + c\mean{B\cap B_{2\varrho}(x_{0})}H(x,D\tilde{w}_{k})+\eps_{k}\snr{D\tilde{w}_{k}}^{s} \, dx
\end{flalign*}
with $c\equiv c(\tilde{c},n,p,q,\alpha,\nu,L,s,[a]_{0,\alpha})$. We next consider the situation when it is $B_{2\varrho}(x_{0}) \Subset B$. In this case we can proceed as for the interior case treated in Lemma \ref{unintgeh} getting \rif{9}, thereby getting \rif{getting}. The two cases can be combined via a standard covering argument. More precisely, upon defining 
\begin{flalign*}
V_{k}(x):=\begin{cases}
\ [H(x,Dw_{k})+\eps_{k}\snr{Dw_{k}}^{s}]^{\tilde{d}} \ \  &B\\
\ 0 \quad &\mathbb{R}^{n}\setminus B
\end{cases}\quad and \quad U_{k}(x):=\begin{cases}
\ H(x,D\tilde{w}_{k})+\eps_{k}\snr{D\tilde{w}_{k}}^{s}  \ \  &B\\
\ 0 \quad &\mathbb{R}^{n}\setminus B
\end{cases}
\end{flalign*}
we easily get
\begin{flalign*}
\mean{B_{\varrho}(x_{0})}[V_{k}(x)]^{1/\tilde{d}} \, dx\le c\left \{ \left(\mean{B_{2\varrho}(x_{0})}V_{k}(x) \, dx\right)^{1/\tilde{d}}+\mean{B_{2\varrho}(x_{0})}U_{k}(x) \, dx \right\}\;,
\end{flalign*}
with $c= c(\tilde{c},n,p,q,\nu,L,s,[a]_{0,\alpha},\alpha)$ and $0<\tilde{d}<1$. At this point the conclusion follows once again by the usual variant of Gehring's lemma.
\end{proof}
We proceed with a fractional differentiability result, following a strategy that has been initially implemented in \cite{CM1}. 
\begin{theorem}\label{T1}
Under assumption \trif{ass1}, \trif{ass4}, \trif{assa}, \trif{maggiore-inizia} and \trif{elleuno}, there exists a unique solution $w\in w_{0}+W^{1,H}_0(B)$ to \trif{p0} and it satisfies
\begin{flalign}\label{39}
Dw \in L_{\mathrm{loc}}^{\frac{np}{n-2\beta}}(B)\cap W^{\min\{2\beta/p,\beta\},p}_{\mathrm{loc}}(B)
\end{flalign}
for every $\beta<\alpha$. In particular, it follows 
\begin{flalign}\label{41}
Dw \in L^{2q-p}_{\mathrm{loc}}(B)\;.\end{flalign}
The energy estimate
\begin{flalign}\label{40}
\int_{B}H(x,Dw) \, dx \le c_{1}\int_{B}H(x,Dw_{0}) \, dx
\end{flalign}
holds for a constant $c_{1}\equiv c_{1}(n,\nu,L,p,q)$, while the global higher integrability estimate
\begin{flalign}\label{40bis}
 \int_{B}[H(x,Dw)]^{1+\sigma} \, dx \le c_{2}\int_{B}[H(x,Dw_{0})]^{1+\sigma} \, dx
\end{flalign}
holds for positive constants $c_{2}, \sigma\equiv c_{2}, \sigma(n,p,q,\alpha,\nu,L,[a]_{0,\alpha}, L_1)$ and it is $\sigma < \delta$ . Moreover, in \trif{40bis} the exponent $\sigma$ can be replaced by any smaller positive number. Finally, the solution $w$ can be obtained as the limit of solutions $\{w_k\}$ to problems \trif{10} in the sense that, up to relabelled subsequences, it holds that
\eqn{convergenze}
$$
w_k \rightharpoonup w \quad \mbox{in $W^{1,p(1+\sigma)}(B) $} \qquad  \mbox{and } \qquad 
w_k \to w \quad \mbox{in} \  W^{1,\frac{np}{n-2\beta}}_{\loc} (B)\;,
$$
for every $\beta < \alpha$. In particular, we can choose $\beta$ such that 
$$p(1+\sigma)< 2q-p < \frac{np}{n-2\beta}\;.$$ 
\end{theorem}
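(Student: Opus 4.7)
The plan is to obtain $w$ as the limit of the solutions $w_k$ to the regularised problems \eqref{10}, whose perturbed operator $A_k$ is uniformly $s$-coercive. Existence and uniqueness of $w_k\in \tilde w_k+W^{1,s}_0(B)$ follow from standard monotone operator theory. The programme then has three stages: (i) establish $k$-uniform $L^{1+\sigma}$-bounds on $H(x,Dw_k)+\epsilon_k|Dw_k|^s$, (ii) derive $k$-uniform fractional differentiability of $Dw_k$ \`a la \cite{CM1}, adapted to the borderline \eqref{ass4}, and (iii) extract a limit $w$ via compactness and show that it solves \eqref{p0} uniquely.

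For step (i), testing \eqref{10} against $w_k-\tilde w_k$ and using \eqref{monotonicity} and Young's inequality gives \eqref{15}; combined with \eqref{conve2}, and choosing the $\epsilon_k$ small enough along the sequence so that $\epsilon_k\|D\tilde w_k\|_{L^s(B)}^s\to 0$, this yields \eqref{uniform-bound} and \eqref{uniform-bound-g}. Lemma \ref{unbougeh} then promotes this to a $k$-uniform global reverse H\"older inequality with exponent $1+\sigma$, and using \eqref{conve2} on the right-hand side produces the uniform form of \eqref{40bis}.

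Step (ii) is the heart of the argument and the place where \eqref{ass4} becomes the sharp threshold. Following \cite{CM1}, we apply the finite-difference scheme to \eqref{10}: test the equation for $\tau_h w_k$ against a cut-off multiple of $\tau_h w_k$ and exploit \eqref{monotonicity} together with the H\"older continuity built into \eqref{assa}$_4$. The term produced by $|a(x+h)-a(x)|\le [a]_{0,\alpha}|h|^\alpha$ interacts with a top power $|Dw_k|^{q-1}$ and, in the strict case \eqref{ass3}, was controlled with room to spare. At the borderline $q/p=1+\alpha/n$ that room disappears; the novelty is to absorb this term by first interpolating $|Dw_k|^{q-1}$ between a lower $L^p$-norm and the higher $L^{p(1+\delta_1)}$-integrability delivered by Lemma \ref{unintgeh}. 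The extra $\delta_1$ of integrability, itself coming from Gehring self-improvement, is precisely what closes the computation at $q/p=1+\alpha/n$. Iterating the resulting inequality over dyadic scales yields, uniformly in $k$ and on interior sub-balls, an estimate of the form
\[
\int |\tau_h Dw_k|^p\, dx \le c\,|h|^{2\beta}\qquad \text{for every }\beta<\alpha,
\]
which encodes a uniform bound of $Dw_k$ in $W^{\min\{2\beta/p,\beta\},p}_{\loc}(B)$ and, by fractional Sobolev embedding, in $L^{np/(n-2\beta)}_{\loc}(B)$.

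Step (iii) is then routine. Uniform bounds in $W^{1,p(1+\sigma)}(B)$ and, by Rellich compactness of the fractional embedding, strong compactness in $W^{1,np/(n-2\beta)}_{\loc}(B)$, give a subsequence with the convergences \eqref{convergenze}. Strong $L^{p(1+\sigma)}_{\loc}$-convergence of $Dw_k$ together with the continuity of $A$ and the growth bound \eqref{assa}$_2$ allows passage to the limit in the weak form of \eqref{10}; the perturbation $\epsilon_k|Dw_k|^{s-2}Dw_k$ vanishes in $L^1(B)$ by the choice of $\epsilon_k$. Strict monotonicity \eqref{monotonicity} ensures uniqueness of $w\in w_0+W^{1,H}_0(B)$, and lower semicontinuity transfers \eqref{40}, \eqref{40bis} and the fractional regularity \eqref{39} from $\{w_k\}$ to $w$. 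The integrability \eqref{41} follows from \eqref{39} by picking $\beta\in(n(q-p)/(2q-p),\alpha)$, an interval that is non-empty exactly because \eqref{ass4} holds since $n(q-p)/(2q-p)<n(q-p)/p\le\alpha$. The main obstacle will be step (ii): engineering the finite-difference iteration so that the higher integrability from Lemma \ref{unintgeh} is inserted at the right place to compensate the loss at the critical exponent ratio, with no slack left over.
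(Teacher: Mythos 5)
Your framework is the right one and matches the paper's: regularise \eqref{10} with a small $\epsilon_k|z|^{s-2}z$ term, derive $k$-uniform Gehring-type higher integrability (Lemma \ref{unbougeh}), obtain $k$-uniform fractional Sobolev estimates along the lines of \cite[Section 5]{CM1}, and pass to the limit by compactness. The two peripheral imprecisions in step (iii) — Rellich compactness into the \emph{same} $L^{np/(n-2\beta)}$ space rather than a slightly smaller exponent, and passage to the limit in the equation from only $L^{p(1+\sigma)}_{\loc}$ convergence when in fact you need at least $L^q_{\loc}$ — are both repairable by the usual tricks (shrink $\beta$, use the $L^{2q-p}_{\loc}$ strong convergence you already have). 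These are not the problem.

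The genuine gap is in the interpolation you propose in step (ii), which as written does not close. You say you will absorb the bad term by interpolating $|Dw_k|^{q-1}$ ``between a lower $L^p$-norm and the higher $L^{p(1+\delta_1)}$-integrability''. That cannot work: the problematic norm coming out of the Caccioppoli-type step is $\|Dw_k\|_{L^{2q-p}}$, and at the borderline $2q-p = p(1+2\alpha/n)$ strictly exceeds $p(1+\delta_1)$ for any Gehring exponent, since Gehring only gives you some $\delta_1>0$ with no lower bound. With those two endpoints there is nothing to interpolate against that reaches $2q-p$, and nothing that Young's inequality can absorb into the left-hand side. The paper's interpolation is between $L^{p(1+\sigma)}$ and $L^{np/(n-2\beta)}$ — the second endpoint is exactly the left-hand-side norm — writing
\[
\frac{1}{2q-p}=\frac{1-\theta}{p(1+\sigma)}+\frac{(n-2\beta)\theta}{np}\;.
\]
Young's absorbability then requires $\theta(2q-p)<p$, which after a short computation is equivalent to $\alpha<\beta(1+\sigma)$. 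Without the Gehring improvement (i.e.\ with $\sigma=0$) this would force $\beta>\alpha$, which is impossible; the role of the extra $\sigma>0$ is to shift the lower bound on $\beta$ to $\alpha/(1+\sigma)<\alpha$, making a valid choice of $\beta$ available. So the higher integrability is used to improve the \emph{lower} interpolation endpoint, not to supply a high enough \emph{upper} endpoint, and the absorbable piece is the $L^{np/(n-2\beta)}$ term — not an $L^{p(1+\delta_1)}$ term. Your intuition that Gehring ``closes the computation with no slack left over'' is correct, but the mechanism you describe is the wrong one.
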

\begin{proof}
\emph{Step 1: Approximation}.
We preliminary recall that $\tilde{w}_k\in W^{1,\infty}(B)$ for every integer $k$; for a number $\sigma>0$ which is yet to be defined, the quantity 
\begin{flalign}\label{epssi}
\eps_{k}:=\left(k+\nr{D\tilde{w}_{k}}^{3(2q-p)}_{L^{2q-p}(B)}+\nr{D\tilde{w}_{k}}_{L^{(1+\sigma)(2q-p)}(B)}^{3(1+\sigma)(2q-p)}\right)^{-1} %\,,\quad \mbox{where}\quad \sigma:= \frac{\delta_2}{2}\;.
\end{flalign}
is well-defined and finite. Moreover, observe that the convergence 
$$
\lim_{k}\, \eps_{k}\int_{B}\snr{D\tilde{w}_{k}}^{2q-p} \, dx=0
$$
happens to be uniform with respect to the choice of $\sigma\geq 0$. We take this choice of $\{\eps_k\}$ in \rif{perty}, where we also choose $s := 2q-p$. We later specify the actual value of $\sigma$. 
The weak form of \eqref{10} is
\begin{flalign}\label{wfk}
\int_{B}A_{k}(x,Dw_{k})\cdot D\varphi \, dx =0 \ \ \mathrm{for \ all \ }\varphi \in W^{1,2q-p}_{0}(B)
\end{flalign}
and \rif{15} becomes
\eqn{24}
$$
\int_{B}H(x,Dw_{k})+\eps_{k}\snr{Dw_{k}}^{2q-p} \, dx
\le c\int_{B}H(x,D\tilde{w}_{k})+\eps_{k}\snr{D\tilde{w}_{k}}^{2q-p} \, dx
$$
with $c\equiv c(n,\nu,L,p,q)$, so that, recalling \rif{epssi} we also have, for $k$ large enough, we have
\eqn{23bis} 
$$
\int_{B}H(x,D\tilde w_{k}) \, dx \stackleq{conve2} 2\int_{B}H(x,Dw_{0}) \, dx \leq 2 L_1:=\bar c\;,
$$
and 
\begin{flalign}\label{23}
\int_{B}H(x,Dw_{k}) \, dx \stackleq{conve2} 2\int_{B}H(x,Dw_{0}) \, dx +c \stackleq{elleuno} c (L_1+1):=\tilde c\;,
\end{flalign}
again for $c\equiv c(n,\nu,L,p,q)$. This now fixes the choice of the numbers $\tilde c$ and $\bar{c}$ appearing in \rif{uniform-bound} and \rif{uniform-bound-g}, respectively, and therefore this ultimately reflects in the value of the two higher integrability exponents $\delta_2\leq \delta_1$,  appearing in Lemmas \ref{unintgeh} and \ref{unbougeh}, respectively, that are independent of the number $\sigma>0$ introduced in \rif{epssi}. Needless to say, and with no loss of generality, we shall consider always the indexes $k$ large enough for which \rif{23bis}-\rif{23} hold. We fix a positive $\sigma$ such that 
\eqn{sceltasigma}
$$
\sigma< \frac{\delta_2}{2}\,, \quad p(1+\sigma) < 2q-p\quad \mbox{and} \quad \sigma < \frac{2\alpha}{n}\;,
$$
and this finally fixes the choice in \rif{epssi}; observe that $\sigma$ exhibits the following dependence:
\eqn{sigma-dep}
$$\sigma \equiv \sigma\left(n,p,q,\alpha,\nu,L,[a]_{0,\alpha},L_1\right)\;.$$ 
Similarly, using this time Lemma \ref{unbougeh}, and again \rif{epssi} we estimate
\begin{flalign}\label{33}
\int_{B}\snr{Dw_{k}}^{p(1+\sigma)} \, dx&\le\int_{B}[H(x,Dw_{k})]^{1+\sigma} \, dx\le \int_{B}[H(x,Dw_{k})+\eps_{k}\snr{Dw_{k}}^{2q-p}]^{1+\sigma} \, dx\nonumber \\
&\le c\int_{B}[H(x,D\tw_{k})+\eps_{k}\snr{D\tw_{k}}^{2q-p}]^{1+\sigma} \, dx  \leq  c\int_{B}[H(x,Dw_{0})]^{1+\sigma} \, dx +c\;,
\end{flalign}
for $c\equiv c(n,p,q,\alpha,\nu,L,[a]_{0,\alpha}, L_1)$, for $k$ large enough.  
We can therefore assume that, up to passing to not relabelled subsequences, $w_{k}\rightharpoonup w$ in $W^{1,p(1+\sigma)}(B)$ for some $w \in w_{0}+W^{1,p(1+\sigma)}_{0}(B)$. By lower semicontinuity in \eqref{24} and \eqref{33}, and again recalling \rif{epssi}, we find:
\begin{flalign}\label{26}
\int_{B}H(x,Dw) \, dx \le c_{1}\int_{B}H(x,Dw_{0}) \, dx \ \ \mathrm{and}\ \ \int_{B}[H(x,Dw)]^{1+\sigma} \, dx \le c_{2}\int_{B}[H(x,Dw_{0})]^{1+\sigma} \, dx\;,
\end{flalign}
with $c_{1}\equiv c_{1}(n,\nu,L,p,q)$ and $c_{2}\equiv c_{2}(n,p,q,\alpha,\nu,L,[a]_{0,\alpha},L_1)$.\\\\
\emph{Step 2: Fractional Sobolev embedding and interpolation}. Here we modify the arguments of  \cite[Section 5]{CM1}. Let us take a ball $B_{2\varrho}\Subset B$ (not necessarily concentric to $B$); %Here we notice that we can argue directly, without a scaling argument. The computations are the same. 
the computations made in \cite[Section 5, p. 470]{CM1}, which hold when assuming \rif{ass4} too, give that, for $0<\varrho\le t<s\le 2\varrho$ (and after scaling back in the proof given in \cite{CM1})
\begin{flalign}\label{27}
&\nr{V_{p}(Dw_{k})}^{2}_{L^{\frac{2n}{n-2\beta}}(B_{t})} +[V_{p}(Dw_{k})]^{2}_{W^{\beta,2}(B_{t})}\nonumber \\
&\le \frac{c}{(s-t)^{2\beta}}\nr{Dw_{k}}_{L^{p}(B_{s})}^{p}+\frac{c}{(s-t)^{2\beta}}\left(\nr{a}^{2}_{L^{\infty}(B_{s})}+\varrho^{2\alpha}[a]_{0,\alpha;B_{s}}^2+\eps_{k}\right)\nr{Dw_{k}}^{2q-p}_{L^{2q-p}(B_{s})}\;,
\end{flalign}
for every $\beta< \alpha$ and $k \in \en$, with $c\equiv c(n,p,q,\nu,L,\alpha,\beta)$. In the above display we are using the standard notation for the Gagliardo seminorm
$$
[V_{p}(Dw_{k})]^{2}_{W^{\beta,2}(B_{t})} := \int_{B_t} \int_{B_t} \frac{|V_p(Dw_k(x))-V_p(Dw_k(y))|^{2}}{|x-y|^{n+2\beta}}\, dx \, dy\;.
$$
We refer to \cite{DPV} for basic properties about fractional Sobolev spaces, and to \cite{CM1, ELM} for the specific ones that are relevant here. We now aim at estimating the second term in the right-hand side of \eqref{27}. In particular there holds:
\begin{flalign}\label{0}
\nr{Dw_{k}}^{p}_{L^{\frac{np}{n-2\beta}}(B_{t})}\le \frac{c\nr{Dw_{k}}^{p}_{L^{p}(B_{s})}}{(s-t)^{2\beta}}+\frac{cT_{k}\nr{Dw_{k}}^{2q-p}_{L^{2q-p}(B_{s})}}{(s-t)^{2\beta}}
\end{flalign}
for $\beta \in (0,\alpha)$, where 
\eqn{TK}
$$T_{k}:=\nr{a}^{2}_{L^{\infty}(B_{\varrho})}+\varrho^{2\alpha}[a]_{0,\alpha;B_{\varrho}}^{2}+\eps_{k}$$ and $c\equiv c(n,p,q,\nu,L,\alpha,\beta)$. Notice that this last constant blows-up when $\beta \to \alpha$. Next, we start taking $\beta<\alpha$ such that
\eqn{sotto}
$$
2q-p < \frac{np}{n-2\beta}
$$
which is implied, by virtue of \rif{ass4}, by
$$
\frac{\alpha}{1+2\alpha/n} < \beta< \alpha
$$
which is in particular satisfied by choosing 
\eqn{byvirtue}
$$
\frac{\alpha}{1+\sigma} < \beta< \alpha
$$
in view of the last inequality in \rif{sceltasigma}. Keeping \rif{sceltasigma} and \rif{sotto} in mind, we now look for $\theta \in (0,1)$ such that
\eqn{by22}
$$
\frac{1}{2q-p}=\frac{1-\theta}{p(1+\sigma)}+\frac{(n-2\beta)\theta}{np}
$$
and this gives
$$
\theta=\frac{n(2q-2p-p\sigma)}{[2\beta-(n-2\beta)\sigma](2q-p)}\;.
$$
We notice that, keeping also $\eqref{ass4}$ in mind,
\begin{flalign}\label{19}
\theta (2q-p)<p\Leftrightarrow\frac{n(2q-2p-p\sigma)}{2\beta-(n-2\beta)\sigma}<p \Leftrightarrow 2n(q-p)<2\beta p(1+\sigma)\Leftarrow \alpha<\beta(1+\sigma)\;,
\end{flalign}
and the last inequality is true by virtue of the choice made in \rif{byvirtue}. By the choices made above, and in particular by \rif{by22}, we use the interpolation inequality
\begin{flalign*}
\nr{Dw_{k}}_{L^{2q-p}(B_{s})}\le \nr{Dw_{k}}^{(1-\theta)}_{L^{p(1+\sigma)}(B_{s})}\nr{Dw_{k}}_{L^{\frac{np}{n-2\beta}}(B_{s})}^{\theta}\;.
\end{flalign*}
In \eqref{19}, we saw that $(2q-p)\theta<p$, so we may apply Young's inequality with conjugate exponents
\begin{flalign*}
t_{1}=\frac{p}{\theta(2q-p)}\quad \mathrm{and}\quad t_{2}=\frac{p}{p-(2q-p)\theta}\;,
\end{flalign*}
to obtain, for $\eps \in (0,1)$
\begin{flalign}\label{20}
\frac{T_{k}}{(s-t)^{2\beta}}\nr{Dw_{k}}^{2q-p}_{L^{2q-p}(B_{s})}& \le \frac{T_{k}}{(s-t)^{2\beta}}\nr{Dw_{k}}^{(1-\theta)(2q-p)}_{L^{p(1+\sigma)}(B_{s})}\nr{Dw_{k}}^{\theta(2q-p)}_{L^{\frac{np}{n-2\beta}}(B_{s})}\nonumber\\
& \le \varepsilon \nr{Dw_{k}}^{p}_{L^{\frac{np}{n-2\beta}}(B_{s})}+c_\eps\left[\frac{T_{k}}{(s-t)^{2\beta}}\nr{Dw_{k}}_{L^{p(1+\sigma)}(B_{s})}^{(1-\theta)(2q-p)}\right]^{\frac{p}{p-(2q-p)\theta}}.
\end{flalign}
Merging \eqref{20} with \eqref{0} and choosing 
\eqn{depconstant}
$$\eps\equiv \eps(n,p,q,\nu,L,\alpha,\beta, \sigma)\equiv  \eps\left(n,p,q,\alpha,\nu,L,[a]_{0,\alpha},\beta,L_1\right)$$ small enough, we conclude that
\begin{flalign*}
\nr{Dw_{k}}^{p}_{L^{\frac{np}{n-2\beta}}(B_{t})}&\le \frac{1}{2}\nr{Dw_{k}}_{L^{\frac{np}{n-2\beta}}(B_{s})}^{p}+\frac{c}{(s-t)^{2\beta}}\nr{Dw_{k}}^{p}_{L^{p}(B_{s})}+c\left[\frac{T_{k}}{(s-t)^{2\beta}}\nr{Dw_{k}}_{L^{p(1+\sigma)}(B_{s})}^{(1-\theta)(2q-p)}\right]^{\frac{p}{p-(2q-p)\theta}}\;,
\end{flalign*}
where $c$ exhibits the same dependence on the constants appearing in \rif{depconstant} as an effect we also determining the value of $c_{\eps}$ in \rif{20}.  
From Lemma \ref{iter}, we have
\begin{flalign}\label{28}
\nr{Dw_{k}}^{p}_{L^{\frac{np}{n-2\beta}}(B_{\varrho})}\le \frac{c}{\varrho^{2\beta}}\nr{Dw_{k}}^{p}_{L^{p}(B_{2\varrho})}+\left[\frac{T_{k}}{\varrho^{2\beta}}\nr{Dw_{k}}_{L^{p(1+\sigma)}(B_{2\varrho})}^{(1-\theta)(2q-p)}\right]^{\frac{p}{p-(2q-p)\theta}}  ,
\end{flalign}
again with $c$ depending as in \rif{depconstant} and for every $k \in \en$. Recalling \rif{sotto} so, by \eqref{28}, H\"older's inequality, \eqref{23} and \eqref{33}, and recalling that $\varrho\leq 1$, we can conclude that
\begin{flalign*}
\nr{Dw_{k}}_{L^{2q-p}(B_{\varrho})}+ \nr{Dw_{k}}_{L^{\frac{np}{n-2\beta}}(B_{\varrho})}&\le \frac{c}{\varrho^{2\beta/p}}\nr{Dw_{k}}_{L^{p}(B_{2\varrho})}+c\left[\frac{T_{k}}{\varrho^{2\beta}}\nr{Dw_{k}}_{L^{p(1+\sigma)}(B_{2\varrho})}^{(1-\theta)(2q-p)}\right]^{\frac{1}{p-(2q-p)\theta}}  \nonumber\\
&\le  \frac{c }{\varrho^{2\beta/p}}\left[\nr{H(\cdot,Dw_{0})}_{L^{1}(B)}^{1/p}+1\right]\notag \\ & \qquad +c\left[\frac{T_{k}}{\varrho^{2\beta}}\left(\nr{H(\cdot,Dw_{0})}_{L^{1+\sigma}(B)}^{1/p}+1\right)^{(1-\theta)(2q-p)}\right]^{\frac{1}{p-(2q-p)\theta}}\;,
\end{flalign*}
holds with $c\equiv c(n,p,q,\alpha,\nu,L,[a]_{0,\alpha},L_1)$ and for sufficiently large $k$. All in all, we deduce that \begin{flalign}\label{32}
\nr{Dw_{k}}_{L^{2q-p}(B_{\varrho})}+ \nr{Dw_{k}}_{L^{\frac{np}{n-2\beta}}(B_{\varrho})}\le c\varrho^{-\lambda}\;,
\end{flalign}
holds for $c,\lambda\equiv c,\lambda(n,p,q,\nu,L,\nr{a}_{\infty},[a]_{0,\alpha},\alpha,\beta,L_1)$, which is independent of $k$. This last estimate used together with \rif{27} and a standard covering argument (keep again the proof of \cite[Theorem 3.1]{CM3} in mind), allows to get the new bound\
\begin{flalign}\label{37}
\nr{Dw_{k}}_{W^{\min\{2\beta/p,\beta\},p}(U)}+\nr{V_{p}(Dw_{k})}_{W^{\beta,2}(U)}\le c\end{flalign}
for any open subset $U\Subset B$ and any $\beta < \alpha$, with $c$ depending as in \rif{32}, and additionally on $\dist(U,\partial B)$. 
By \rif {33} and \rif{37} we can use the standard compact embedding theorems of fractional Sobolev spaces and again a standard diagonal argument allows to conclude that, up to (not relabelled) subsequence, it holds that $Dw_{k}\to Dw$ strongly in $L^{2q-p}_{\mathrm{loc}}(B)$ and a.e. (this completely proves \rif{convergenze}). This allows to let $k\to \infty$ in \eqref{wfk}, obtaining that $w$ is a distributional solution to \eqref{p0}. Using lower semicontinuity in \rif{37} then yields \rif{39}-\rif{41}, while \rif{40}-\rif{40bis} are a consequence of \rif{26}. Finally, let $\tilde{w}$ be another distributional solution to \eqref{p0} such that $\tilde{w} \in W^{1,H}(B)$. By Lemma \ref{testing-prop} it follows that we can use $\varphi=w-\tilde{w}$ as test function in the weak formulation
\begin{flalign*}
\int_{B}(A(x,Dw)-A(x,D\tilde{w}))\cdot(Dw-D\tilde{w}) \, dx=0\;.
\end{flalign*}
At this point, the strict monotonicity \eqref{monotonicity} of the vector field $A(\cdot)$ gives that $\tilde{w}=w$. 
\end{proof}

\section{Another higher integrability estimate}\label{higher-sec-2}
This is in the following:
\begin{theorem}\label{T-higher}
Let $u \in W^{1,H}(\Omega)$ be a solution to the equation \trif{eq}, under the assumptions \trif{ass4} and \trif{assa}-\trif{G}. Assume, moreover, that $H(\cdot, F)\in L^{\gamma}_{\loc}(\Omega)$, for some $\gamma>1$. 
There exists a positive higher integrability exponent $\delta< \gamma-1$, 
depending only on $\data$, such that 
$H(\cdot, Du) \in L^{1+\delta}_{\loc}(\Omega)$; moreover, the reverse H\"older type inequality
\begin{flalign}\label{46tris}
\left(\mean{B_{\varrho}}[H(x, Du)]^{1+\delta} \, dx\right)^{\frac{1}{1+\delta}}  \le c\mean{B_{2\varrho}}H(x,Du) \, dx + c 
\left(\mean{B_{2\varrho}}[H(x, F)]^{1+\delta} \, dx\right)^{\frac{1}{1+\delta}} 
\end{flalign}
holds for every ball $B_{2\varrho}\subset \Omega$, where the constant $c$ depends again only on $\data$. In particular, in the case $F \equiv 0$, for every open subset $\Omega_0 \Subset \Omega$ there exists a constant $c\equiv c(\textnormal{\data}, \dist (\Omega_0, \partial \Omega))\geq 1$ such that
$$
\|H(x, Du)\|_{L^{1+\delta}(\Omega_0)}\leq c \;.
$$
Moreover, in \trif{46tris} the exponent $\delta$ can be replaced by any smaller number. 
\end{theorem}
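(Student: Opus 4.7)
The plan is to derive a reverse-H\"older inequality with right-hand side for $H(x,Du)$ on concentric balls and then invoke the standard variant of Gehring's lemma. This parallels the interior reverse-H\"older analysis carried out in Section \ref{higher-sec-1} for the homogeneous problem, but one now has to carry the forcing term $H(x,F)$ through the computation.

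\textbf{Caccioppoli inequality.} Fix a ball $B_{2\varrho}\Subset \Omega$ and pick a standard cutoff $\eta\in C^1_c(B_{2\varrho})$ with $\chi_{B_\varrho}\le \eta\le \chi_{B_{2\varrho}}$ and $\snr{D\eta}\le 4/\varrho$, together with an exponent $s>q$. Admissibility of $\varphi:=\eta^s(u-(u)_{B_{2\varrho}})$ as a test function in the weak form of \rif{eq} follows from Lemma \ref{testing-prop}. The second inequality in \rif{monotonicity} yields $\langle A(x,Du),Du\rangle \ge c^{-1}H(x,Du)$, while the upper bound in the second line of \rif{assa}, the growth \rif{G}, and separate Young inequalities for the $p$- and $a(x)$-weighted $q$-parts let me absorb the gradient contributions and reach
$$
\mean{B_\varrho} H(x,Du)\, dx \le c\mean{B_{2\varrho}} H\!\left(x,\frac{u-(u)_{B_{2\varrho}}}{\varrho}\right) dx + c\mean{B_{2\varrho}} H(x,F)\, dx.
$$

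\textbf{Sobolev-Poincar\'e and Gehring with forcing.} By the intrinsic double-phase Sobolev-Poincar\'e inequality of \cite{CM1, OK2}, which remains valid under the borderline condition \rif{ass4}, the first term on the right-hand side is bounded by $c\bigl(\mean{B_{2\varrho}}[H(x,Du)]^d\, dx\bigr)^{1/d}$ for some exponent $d\equiv d(n,p,q,\alpha,[a]_{0,\alpha})<1$ and some $c$ that depends also on $\|H(\cdot,Du)\|_{L^1(\Omega)}$. Combining the two steps yields the reverse-H\"older-type inequality with forcing,
$$
\mean{B_\varrho} H(x,Du)\, dx \le c\left(\mean{B_{2\varrho}}[H(x,Du)]^d\, dx\right)^{1/d} + c\mean{B_{2\varrho}} H(x,F)\, dx,
$$
with $c\equiv c(\data)$. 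Since $H(\cdot,F)\in L^\gamma_{\loc}(\Omega)$ with $\gamma>1$, the classical Gehring lemma with right-hand side produces an exponent $\delta\equiv \delta(\data)>0$ with $\delta<\gamma-1$ and the reverse-H\"older estimate \rif{46tris}. The self-improving character of the Gehring argument lets $\delta$ be replaced by any smaller positive number; the special case $F\equiv 0$, with its dependence on $\dist(\Omega_0,\partial\Omega)$, then follows from a standard covering argument combined with the same reverse-H\"older bound.

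The delicate point is the Sobolev-Poincar\'e step: \rif{ass4} is precisely the threshold regime in which the double-phase Sobolev-Poincar\'e inequality barely survives, so in contrast to the strict subcritical case \rif{ass3} one must appeal to the refined versions in \cite{CM1, OK2} whose constants are allowed to depend on $\|H(\cdot,Du)\|_{L^1}$ -- exactly the dependence already encoded in $\data$. Once this ingredient is granted, the remaining steps -- the Caccioppoli computation, the Young estimates for the $p$- and $q$-parts, and the Gehring lemma with forcing -- are essentially routine.
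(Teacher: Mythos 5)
Your proposal follows essentially the same route the paper takes: test with a cutoff-weighted centered oscillation of $u$ (the paper uses $\eta^q(u-(u)_{B_{2\varrho}})$, you use $\eta^s$ with $s>q$, an inessential variation), derive the Caccioppoli bound carrying the $H(x,F)$ forcing term, apply the intrinsic Sobolev--Poincar\'e inequality of \cite{CM1,OK2}, and conclude by Gehring's lemma with right-hand side. The key observations you highlight — admissibility via Lemma \ref{testing-prop}, validity of the Sobolev--Poincar\'e step under \eqref{ass4} with a constant absorbed into $\data$, and the self-improving/covering arguments for the final claims — all match the paper's proof.
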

\begin{proof} The proof is similar to the one of Lemma \ref{unintgeh} and to the one offered in \cite[Theorem 1.1]{CM1}, and we shall report only a brief sketch.   
Let $B_{2\varrho}\Subset \Omega$. We test the weak formulation of \eqref{eq} against $\varphi:= \eta^{q}(u-(u)_{B_{2\varrho}})$, where $\eta \in C^{1}_{c}(B_{2\varrho})$, $\chi_{B_{\varrho}}\le \eta\le \chi_{B_{2\varrho}}$ and $\snr{D\eta}\le 4/\varrho$. Notice that $\varphi$ is admissible by Lemma \ref{testing-prop}. Using \eqref{assa}-\eqref{G}, the fact that $q<\frac{p(q-1)}{p-1}$, and Young inequality we get
\eqn{91}
$$
\int_{B_{2\varrho}}H(x,Du) \eta^{q} \, dx \le  c\int_{B_{2\varrho}}H\left(x,\frac{u-(u)_{B_{2\varrho}}}{\varrho}\right) \, dx +  c\int_{B_{2\varrho}}H(x,F)\, dx\;, 
$$
with $c\equiv c(n,\nu,L,p,q)$. Now we apply Sobolev-Poincar\'e's inequality to the first term on the right-hand side of \eqref{91} to have
\begin{flalign*}
\mint_{B_{\varrho}}H(x,Du) \, dx 
\le c\left(\mint_{B_{2\varrho}}[H(x,Du)]^{d} \, dx\right)^{1/d}+c\mint_{B_{2\varrho}}H(x,F) \, dx\;,
\end{flalign*}
with $c\equiv c(\data)$ and $d\equiv d(n,p,q) \in (0,1)$.  Now, since $H(\cdot,F)\in L^{\gamma}_{\mathrm{loc}}(\Omega)$ and $\gamma>1$, by a variant of Gehring's Lemma we can conclude with \rif{46tris} for a number $\delta$ as described in the statement of the theorem, and the proof is complete.
\end{proof}

\section{Proof of Theorem \ref{cz}: A conditional reverse H\"older inequality}\label{higher-sec-3}
In this section we start the proof of Theorem \ref{cz}. Our aim is to prove the reverse type inequality in Theorem \ref{T2} below. This extends a similar fact obtained in \cite[Theorem 5.1]{CM1} under the assumption \rif{ass3}; we now replace this by \rif{ass4}. The result we are going to develop here is in fact a technical tool in the forthcoming proof of Theorem \ref{cz} contained in the next section. We start considering the original solution $u$ from Theorem \ref{cz}. By Theorem \ref{T-higher} and a standard covering argument, we know that for every choice of open subset $\Omega_0\Subset \tilde \Omega_0\Subset \Omega$ (with $\dist (\Omega_0 , \partial \tilde \Omega_0)\approx \dist (\tilde \Omega_0 , \partial \Omega)\approx \dist (\Omega_0 , \partial  \Omega)$) as in the statement of Theorem \ref{cz}, there exists a constant $c\geq 1$ such that
\eqn{quellaglobale2}
$$
\|H(\cdot, Du)\|_{L^{1+\delta}(\Omega_0)}\leq  c\left(\textnormal{\data}, \dist (\Omega_0, \partial \Omega), \|H(\cdot, F)\|_{L^\gamma(\tilde \Omega_0)}\right)
$$
holds for some exponent $\delta$ depending only on $\textnormal{\data}$ and which is such that $\delta < \gamma-1$. We then use the setting of Section \ref{higher-sec-1} with $w_0\equiv u$, by considering problems of the type
\begin{flalign}\label{p0-again}
\begin{cases}
\ -\diver A(x,Dw)=0 \ \ \mathrm{in} \ B_{4\varrho}\\
\ w\in u+W^{1,H}_{0}(B_{4\varrho})\,,
\end{cases}
\end{flalign}
where $B_{4\varrho}$ is a ball such that $B_{8\varrho}\subset \Omega_0$ and $8\varrho\leq 1$, with the number $\delta$ coming from \rif{quellaglobale2} as the one fixed in \rif{maggiore-inizia}; the constant $L_1$ appearing in is obviously fixed by $L_1:=\|H(\cdot, Du)\|_{L^{1}(B_{4\varrho})}$. 
Moreover, by \rif{40}, we have
\eqn{superap}
$$
\|H(\cdot, Dw)\|_{L^{1}(B_{4\varrho})}\leq c(n,p,q,\nu, L) \|H(\cdot, Du)\|_{L^{1}(B_{4\varrho})}\;.
$$
In view of this last inequality and by Theorem \ref{T-higher}, this time applied with $F\equiv 0$, we obtain that $Dw \in L^{1+\delta_0}_{\loc}(4B)$ for some $\delta_0\equiv \delta_0(\data)>0$. Moreover, recalling that in Theorem \ref{T1} we can take $\sigma\in (0, \delta]$ as small as we like, and in particular $\sigma \leq \delta_0$, we have the following reverse H\"older type inequality:
%and that the reverse H\"older type inequality holds
\begin{flalign}\label{4600}
 \left(\mean{B_{2\varrho}}[H(x,Dw)]^{1+\sigma} \, dx\right)^{\frac{1}{1+\sigma}} & \le c\mean{B_{4\varrho}}H(x,Dw) \, dx\;,
\end{flalign}
for a constant $c\equiv c (\data)$ and for a final number $\sigma$ depending only on $\data$. The main result of this  section is now
\begin{theorem}\label{T2}
Let $w \in W^{1,H}(B_{4\varrho})$ be a solution to  \trif{p0-again} under the assumptions \eqref{ass4}, \eqref{assa}. Assume that 
\begin{flalign}\label{dega}
\sup_{x \in B_{\varrho}}a(x)\le K[a]_{0,\alpha}\varrho^{\alpha}
\end{flalign}
holds for some $K\ge 1$. Then, for any $\bar{q}<np/(n-2\alpha)$ ($=\infty$ when $\alpha=1$ and $n=2$) there exists a positive constant $c\equiv c(\data, \bar q,K)$, such that the following reverse H\"older type inequality holds:
\begin{flalign}\label{46}
\left(\mean{B_{\varrho}}\snr{Dw}^{\bar{q}} \, dx\right)^{1/\bar{q}} & \le c \left(\mean{B_{4\varrho}}H(x,Dw) \, dx\right)^{1/p}\;.
\end{flalign}
Moreover, let $1< \gamma_1< \gamma_2 \leq 4$, the inequality
\begin{flalign}\label{46bis}
\left(\mean{B_{\gamma_1\varrho}}\snr{Dw}^{2q-p} \, dx\right)^{\frac{1}{2q-p}} & \le c\left(\mean{B_{\gamma_2 \varrho}}H(x,Dw) \, dx\right)^{1/p}
\end{flalign}
holds for a constant $c$ additionally depending on $\gamma_1, \gamma_2$. 
\end{theorem}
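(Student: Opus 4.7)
The plan is to refine the fractional differentiability analysis of Theorem~\ref{T1} and combine it with the self-improving reverse Hölder bound \eqref{4600}, extracting a scaling gain from the degeneracy hypothesis \eqref{dega}. The scheme mirrors the one used under \eqref{ass3} in \cite[Theorem~5.1]{CM1}, but the borderline case \eqref{ass4} demands the extra higher-integrability slack that \eqref{4600} now makes available.

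I would first pick $\beta\in(\alpha/(1+\sigma),\alpha)$ so close to $\alpha$ that $\max\{\bar q,2q-p\}<np/(n-2\beta)$; the hypothesis $\bar q<np/(n-2\alpha)$ handles the first inequality, while \eqref{ass4} (via \eqref{byvirtue}) handles the second, thanks to the positivity of $\sigma$. Passing to the limit $k\to\infty$ in \eqref{28} by means of the strong convergence \eqref{convergenze}, the analogous interpolation inequality is available for $w$ itself:
\begin{equation*}
\|Dw\|^{p}_{L^{np/(n-2\beta)}(B_{\varrho})}\le \frac{c}{\varrho^{2\beta}}\|Dw\|^{p}_{L^{p}(B_{2\varrho})}+c\left[\frac{T}{\varrho^{2\beta}}\|Dw\|^{(1-\theta)(2q-p)}_{L^{p(1+\sigma)}(B_{2\varrho})}\right]^{p/(p-(2q-p)\theta)},
\end{equation*}
with $T=\|a\|^{2}_{L^{\infty}(B_{\varrho})}+\varrho^{2\alpha}[a]^{2}_{0,\alpha}$ and $\theta\in(0,1)$ as in the proof of Theorem~\ref{T1}. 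The degeneracy \eqref{dega} gives the crucial scaling gain $T/\varrho^{2\beta}\le c(K)\varrho^{2(\alpha-\beta)}$, while \eqref{4600}, combined with $|Dw|^{p}\le H(x,Dw)$, provides the intrinsic bound $\|Dw\|_{L^{p(1+\sigma)}(B_{2\varrho})}\le c\,\varrho^{n/[p(1+\sigma)]}M^{1/p}$, where $M:=\mean{B_{4\varrho}}H(x,Dw)\,dx$, together with the cheaper $\|Dw\|^{p}_{L^{p}(B_{2\varrho})}\le c\,\varrho^{n}M$.

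Feeding both ingredients into the display above, each term on its right-hand side becomes an explicit expression in $M$ and $\varrho$; a Young-type balancing (using both positive gains $\alpha-\beta>0$ from the degeneracy and $\sigma>0$ from the reverse Hölder, and exploiting the freedom to take $\sigma$ as small as required in Theorem~\ref{T1}) forces them into the clean bound $\|Dw\|^{p}_{L^{np/(n-2\beta)}(B_{\varrho})}\le c\,M\varrho^{n-2\beta}$. Dividing by $|B_{\varrho}|^{p/\bar q}\sim\varrho^{n-2\beta}$ and applying Hölder's inequality yield \eqref{46}. Inequality \eqref{46bis} follows along the same lines, with $2q-p$ in place of $\bar q$; the generic concentric pair $1<\gamma_{1}<\gamma_{2}\le 4$ is handled by a standard covering argument applied to balls $B_{\gamma_{1}\varrho}\subset B_{\gamma_{2}\varrho}$.

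The principal obstacle is the exponent bookkeeping in the borderline regime. Without the strict inequality \eqref{ass3}, there is no slack that comes for free from the geometry: every Young-type step has to be calibrated precisely. What allows the scheme to close is the coexistence of the two positive gains—the scaling factor $\varrho^{2(\alpha-\beta)}$ coming from \eqref{dega}, and the self-improvement exponent $\sigma>0$ coming from \eqref{4600}—which can be balanced against each other. Tracking how the parameters $\theta$, $\sigma$, and $\beta$ interact in this balancing is the delicate point, and it is precisely here that the sharp condition \eqref{ass4} replaces the stricter \eqref{ass3} of \cite{CM1,CM3}.
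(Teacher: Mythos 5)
Your overall blueprint — pass to the limit in the fractional/interpolation estimate \eqref{28}, use \eqref{dega} (via its stability \eqref{dega54}) to convert $T/\varrho^{2\beta}$ into the gain $\varrho^{2(\alpha-\beta)}$, and then close via the self-improving estimate \eqref{4600} — is the same strategy the paper follows, and steps such as the choice $\beta\in(\alpha/(1+\sigma),\alpha)$ and the final covering argument for \eqref{46bis} are correct.

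The gap is in the ``Young-type balancing''. You propose to bound the \emph{entire} norm $\nr{Dw}_{L^{p(1+\sigma)}(B_{2\varrho})}$ by $c\,\varrho^{n/[p(1+\sigma)]}M^{1/p}$ with $M=\mean{B_{4\varrho}}H(x,Dw)\,dx$, and then feed this into the display. But the second term on the right of that display has exponent $b_1=p/(p-(2q-p)\theta)$ on the bracket, so the resulting power of $M$ is
\begin{equation*}
\mu:=\frac{(1-\theta)(2q-p)\,b_1}{p}=\frac{(1-\theta)(2q-p)}{\,p-(2q-p)\theta\,}\;,
\end{equation*}
and since $2q-p>p$ (because $q>p$) one has $\mu>1$, strictly. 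A term of the form $c\,\varrho^{A}M^{\mu}$ with $\mu>1$ cannot be absorbed into a ``clean'' bound $c\,M\varrho^{n-2\beta}$ by any Young step, because $M$ is an average over a small ball and need not be bounded independently of $\varrho$; the only way to reduce the power is to pay for it with an a priori bound and check that the residual $\varrho$-exponent stays nonnegative, which is exactly the delicate bookkeeping you deferred. This is where the borderline case genuinely bites: your exponent check reduces to $2(\alpha-\beta)b_1-\tfrac{n\mu\sigma}{1+\sigma}+2\beta\geq0$, which is \emph{weaker} than what the paper verifies (it has $\mu$ in place of $1$, and $\mu\to\infty$ as $\beta\downarrow\alpha/(1+\sigma)$), and I do not see that it holds across the required range of $\beta$.

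The paper avoids this by \emph{splitting} the factor $\bigl(\mean{B_{2\varrho}}|Dw|^{p(1+\sigma)}\,dx\bigr)^{b_2/p}$ multiplicatively: one factor $\nr{Dw}_{L^{p(1+\sigma)}(B_{2\varrho})}^{(b_2(1+\sigma)-1)/(p(1+\sigma))}$ (a genuine global norm, not an average) is bounded by a $\data$-constant using the a priori higher integrability inherited from \eqref{33}, \eqref{26}, \eqref{superap}; the remaining factor is exactly $\bigl(\mean{B_{2\varrho}}|Dw|^{p(1+\sigma)}\,dx\bigr)^{1/(p(1+\sigma))}$, to which \eqref{4600} is applied \emph{once} to produce precisely the unit power $M^{1/p}$. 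With this split, the residual exponent of $\varrho$ is
\begin{equation*}
\frac1p\Bigl[2(\alpha-\beta)b_1+2\beta-\frac{n\sigma}{1+\sigma}\Bigr]\;,
\end{equation*}
which is verified to be positive via $\beta>\alpha/(1+\sigma)$ and $\sigma<2\alpha/n$. That multiplicative decomposition and exponent check, rather than a Young inequality, is the crux of the borderline argument, and it is the step your proposal needs to make explicit.
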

\begin{proof}
We shall revisit the arguments in the proof of Theorem \ref{T1}, and we keep the notation introduced there; in particular, we shall retain and \rif{sceltasigma} and \rif{byvirtue} concerning $\beta$ and $\sigma$ (and of course $\sigma$ obeys the smallness conditions enumerated before the statement). Given the reference ball $B_{4\varrho}$ mentioned in the statement of the theorem, all the remaining balls will be concentric unless otherwise stated. We go back to the proof of Theorem \ref{T1}, Step 2,  where we consider this time concentric balls $B_{\varrho}\subset B_{4\varrho}\equiv B$, and the approximate solutions $\{w_k\}$ defined in \rif{10} with the choice $w_0=u$; ultimately, this means we are considering problems \rif{p0-again}. Theorem \ref{T1} implies, in particular, that $Dw_{k}\to Dw$ strongly in $L^{p(1+\sigma)}(B_{2\varrho})$; letting $k\to \infty$ in \eqref{28} and recalling the definition \rif{TK},  we conclude with 
\begin{flalign}\label{44}
\nr{Dw}_{L^{\frac{np}{n-2\beta}}(B_{\varrho})}&\le \frac{c}{\varrho^{2\beta/p}}\nr{Dw}_{L^{p}(B_{2\varrho})}+c\left[\frac{1}{\varrho^{2\beta}}\left(\nr{a}^{2}_{L^{\infty}(B_{2\varrho})}+c\varrho^{2\alpha}[a]^{2}_{0,\alpha;B_{2\varrho}}\right)\nr{Dw}_{L^{p(1+\sigma)}(B_{2\varrho})}^{(1-\theta)(2q-p)}\right]^{\frac{1}{p-(2q-p)\theta}}\;,
\end{flalign}
with $c\equiv c(\data,\beta)$. Notice now that condition \eqref{dega} is stable when the radius increases for nested balls. In fact, \eqref{dega} and the $\alpha$-H\"older continuity of $a(\cdot)$ imply
\begin{flalign}\label{dega54}
\sup_{x\in B_{M\varrho}}a(x) \le (K+3M)\varrho^{\alpha}[a]_{0,\alpha}\;, \qquad \forall \ M \in (1,4)\;,
\end{flalign}
so, with \eqref{dega54}, \eqref{44} becomes (recall $K\geq 1$)
\begin{flalign}\label{49}
\nr{Dw}_{L^{\frac{np}{n-2\beta}}(B_{\varrho})}^{p}&\le \frac{c}{\varrho^{2\beta}}\nr{Dw}_{L^{p}(B_{2\varrho})}^{p}+c\left(\varrho^{2(\alpha-\beta)}[a]_{0,\alpha;B_{2\varrho}}^{2}K^2\nr{Dw}_{L^{p(1+\sigma)}(B_{2\varrho})}^{(1-\theta)(2q-p)}\right)^{\frac{p}{p-(2q-p)\theta}}\;.
\end{flalign}
Define
\begin{flalign*}
b_{1}:=\frac{p}{p-(2q-p)\theta} \quad \mathrm{and}\quad b_{2}:=\frac{(1-\theta)(2q-p)}{(1+\sigma)[p-(2q-p)\theta]}\;\;.
\end{flalign*}
In these terms, after averaging and making a few elementary manipulations, \eqref{49} reads as
\begin{flalign*}
\left(\mean{B_{\varrho}}\snr{Dw}^{\frac{np}{n-2\beta}} \, dx\right)^{\frac{n-2\beta}{np}}& \le c\left(\mean{B_{2\varrho}}\snr{Dw}^{p} \, dx\right)^{1/p}\\
&\quad +c\left([a]_{0,\alpha;B_{2\varrho}}^{2}K^{2}\right)^{b_{1}/p}\varrho^{\frac{2(\alpha-\beta)b_{1}+2\beta +n(b_{2}-1)}{p}}\left(\mean{B_{2\varrho}}\snr{Dw}^{p(1+\sigma)} \, dx\right)^{b_{2}/p}\;.
\end{flalign*}
Let us re-write the last term in the above inequality as follows:
\begin{flalign*}
\left(\mean{B_{2\varrho}}\snr{Dw}^{p(1+\sigma)} \, dx\right)^{b_{2}/p}&=\left(\mean{B_{2\varrho}}\snr{Dw}^{p(1+\sigma)} \, dx\right)^{\frac{b_{2}(1+\sigma)-1}{p(1+\sigma)}}\left(\mean{B_{2\varrho}}\snr{Dw}^{p(1+\sigma)} \, dx\right)^{\frac{1}{p(1+\sigma)}}\\
&= \omega_{n}^{\frac{1-b_{2}(1+\sigma)}{p(1+\sigma)}}\varrho^{-n\frac{b_{2}(1+\sigma)-1}{p(1+\sigma)}}\nr{Dw}_{L^{p(1+\sigma)}(B_{2\varrho})}^{\frac{b_{2}(1+\sigma)-1}{p(1+\sigma)}}\left(\mean{B_{2\varrho}}\snr{Dw}^{p(1+\sigma)} \, dx\right)^{\frac{1}{p(1+\sigma)}}\;\;,
\end{flalign*}
where $\omega_{n}$ is the measure of the $n$-dimensional unit ball. 
Now notice that
$$
\frac{1}{p}\left[2(\alpha-\beta)b_{1}+2\beta+n(b_{2}-1)-nb_{2}+\frac{n}{(1+\sigma)}\right]\ge \frac{1}{p}\left(2\beta-\frac{n\sigma}{1+\sigma}\right)\stackrel{\rif{byvirtue}}{\ge} \frac{2\alpha-n\sigma}{p(1+\sigma)}\stackrel{\rif{sceltasigma}}{>}0\,,
$$
so, merging the content of the previous three displays and using H\"older's inequality, we conclude with
\begin{eqnarray}\label{50}
\left(\mean{B_{\varrho}}\snr{Dw}^{\frac{np}{n-2\beta}} \, dx\right)^{\frac{n-2\beta}{np}}& \leq& c\left\{1+\left([a]_{0,\alpha;B_{2\varrho}}^{2}K^{2}\right)^{b_{1}/p}\nr{Dw}_{L^{p(1+\sigma)}(B_{2\varrho})}^{\frac{b_{2}(1+\sigma)-1}{p(1+\sigma)}}\right\}\left(\mean{B_{2\varrho}}\snr{Dw}^{p(1+\sigma)} \, dx\right)^{\frac{1}{p(1+\sigma)}}\nonumber\\
&\stackleq{superap}&c\left(\mean{B_{2\varrho}}\snr{Dw}^{p(1+\sigma)} \, dx\right)^{\frac{1}{p(1+\sigma)}}\stackleq{4600} c\left(\mean{B_{4\varrho}}H(x,Dw)\, dx\right)^{\frac{1}{p}} \;,
\end{eqnarray}
with $c\equiv c(\data, \beta, K)$. We have therefore proved \rif{46} for the values of $\bar{q}$ such that 
$np/(n-\beta p)\leq  \bar{q} $, where $\beta$ is such that $\alpha/(1+\sigma)< \beta< \alpha$, and with $\sigma$ coming from Theorem \ref{T1} as specified at the beginning of the proof; the same obviously follows using H\"older's inequality. In particular, \rif{46} follows for a suitable choice of $\beta$. As for the \rif{46bis}, this follows by a variant of a standard covering argument starting from the validity of \rif{46}; let us briefly recall it. We can cover $B_{\varrho}$ by a finite number $k\equiv k (n,\gamma)$ of balls $\{B_i\}_{i\leq k}$ touching $B_{\gamma_1\varrho}$ and with radius $\tilde \varrho = (\gamma_2-\gamma_1)\varrho/100$. Obviously, it is $2B_i \Subset B_{\gamma_2\varrho}$ and these balls are not necessarily concentric to the starting ball $B_{\varrho}$. Notice that, for every $i \leq k$, it is
$$
\sup_{x \in B_{i}}a(x)\le\sup_{x \in B_{\gamma\varrho}}a(x)\leq  (K+3\gamma)[a]_{0,\alpha}\varrho^{\alpha}\leq \left[\frac{100(K+3\gamma_2)}{\gamma_2-\gamma_1}\right] [a]_{0,\alpha}\tilde \varrho^\alpha\;.
$$
We can therefore apply \rif{46} to each of the balls $B_i$, thereby getting 
$$
\left(\mean{B_{i}}\snr{Dw}^{\bar{q}} \, dx\right)^{1/\bar{q}}  \le c \left(\mean{2B_{i}}H(x,Dw) \, dx\right)^{1/p}\;,
$$
for a constant $c$ now depending also on $\gamma$. By summing up the above inequalities with respect to $i$ and again increasing the involved constant in a way that depends only on $n,\gamma$, we finally arrive at \rif{46bis} and the proof is complete.  
 \end{proof}
\section{Proof of Theorem \ref{cz}: Exit time arguments and conclusion}

The general scheme of the proof of Theorem \ref{cz} is the same one of \cite[Theorem 1.1]{CM3}. We ask the reader to have \cite{CM3} at hand since we shall essentially indicate the relevant modifications and we shall follow exactly the same steps as described there, reporting them with the same titles. 

\emph{Step 1: Exit time and covering of the level set}. We recall that with open subset $\Omega_0\Subset \tilde \Omega_0\Subset \Omega$ as in the statement of the theorem, we have that \rif{quellaglobale2} holds. 
Then we consider $B_R \subset \Omega_0$, where $R\leq r$ (and $r$ is the small radius appearing in the statement of Theorem \ref{cz} and to be determined at the end of the proof). We proceed with the exit time and covering argument as in the proof of \cite[Theorem 1.1]{CM3}. In particular, this yields the family of balls $\{B_{i}\}\equiv \{B_{\varrho_i}(x_i)\}\equiv \{5\tilde B_{i}\}$ as indicated in \cite[(4.9)-(4.11)]{CM3}. All the balls in question are contained in $B_R$. Before going on, similarly to \rif{dataref}, we set
$$\data_0 \equiv 	\data_0 
\left(n,p,q,\alpha,\ratio,\|a\|_{L^\infty}, [a]_{\alpha}, \|H(\cdot,Du)\|_{L^1(\Omega)},\dist (\Omega_0, \partial \Omega), \|H(\cdot, F)\|_{L^\gamma(\tilde \Omega_0)}, \gamma\right)\;.$$

\emph{Step 2: A first comparison function.} We recover the setting of Sections \ref{higher-sec-1} and \ref{higher-sec-3}. By \rif{quellaglobale2} we use the setting of Section \ref{higher-sec-1} with this choice of $\delta$ in \rif{maggiore-inizia}; in this way we are also ready to use the setting of Section \ref{higher-sec-3} and Theorem \ref{T2}. We are therefore able to apply Theorem \ref{T1} that, in turn, allows to define $w_{i}\in u+W^{1,H}_{0}(4B_{i})$ as the solutions to the Dirichlet problem
\begin{flalign}\label{55}
\begin{cases}
\ -\diver A(x,Dw_{i})=0 \ \ \mathrm{in} \ 4B_{i}\\
\ w_{i}\in u+W^{1,H}_{0}(4B_{i})\;.
\end{cases}
\end{flalign}
where the balls $B_{i}$ are from Step 1. Again, by Theorem \ref{T1} with $w_0\equiv u$ as explained in the previous section, we have 
\begin{flalign}\label{53}
w_{i}\in W^{1,2q-p}_{\mathrm{loc}}(4B_{i}), \quad Dw_{i} \in L^{\frac{np}{n-2\beta}}_{\mathrm{loc}}(4B_{i}) \ \ \mathrm{for \ all \ }\beta<\alpha
\end{flalign}
and
\eqn{54}
$$
\left\{
\begin{array}{c}
\displaystyle 
\mean{4B_{i}}H(x,Dw_{i}) \, dx \le c_{1}\mean{4B_{i}}H(x,Du) \, dx\\ [17 pt]
\displaystyle  \int_{4B_{i}}[H(x,Dw_{i})]^{1+\sigma} \, dx\le c_{2}\int_{4B_{i}}[H(x,Du)]^{1+\sigma} \, dx
%\\ [17 pt]
%\displaystyle
% \left(\mean{2B_{i}}[H(x,Dw_i)]^{1+\sigma} \, dx\right)^{\frac{1}{1+\sigma}}  \le c_3\mean{4B_{i}}H(x,Dw_i) \, dx\;,
\end{array}
\right.
$$
hold for positive constants $c_{1}\equiv c_{1}(n,\nu,L,p,q)$, $c_{2}, \sigma \equiv c_{2}, \sigma(\data)$. As in \cite[(4.17)]{CM3}, we gain that for every $\varepsilon \in (0,1)$ there exists a constant $c_{\varepsilon}$, depending also by $n, \nu, L, p, q$, such that 
\begin{flalign}\label{56}
\mean{4B_{i}}&\snr{V_{p}(Du)-V_{p}(Dw_{i})}^{2}+a(x)\snr{V_{q}(Du)-V_{q}(Dw_{i})}^{2} \, dx \nonumber \\
& \le\varepsilon \mean{4B_{i}}H(x,Du) \, dx +c_{\varepsilon}\mean{4B_{i}}H(x,F) \, dx\;.
\end{flalign}

\emph{Step 3: A second comparison function}. As in \cite{CM3}, we consider a point $x_{i,m}\in \overline{2B_{i}}$ such that 
\eqn{defiafrozen}
$$a(x_{i,m})=\sup_{x\in 2B_{i}}a(x)\;.$$
By \eqref{53} we have in particular that $w_{i}\in W^{1,q}(2B_{i})$ so that, by setting
\eqn{caso-speciale}
$$
H_{i,m}(z):=H(x_{i,m},z)\;,
$$
we have that $w_{i}\in W^{1,H_{i,m}}(2B_{i})$, and, in particular $w_{i}\in W^{1, 2q-p}(2B_{i})$. We can therefore use Theorem \ref{T1} for the special case of $H$-function in \rif{caso-speciale}. This yields 
$v_{i}\in W^{1,H}(2B_{i})\cap W^{1,H_{i,m}}(2B_{i})$ as the unique solution to the Dirichlet problem
\begin{flalign*}
\begin{cases}
\ -\diver A(x_{i,m},Dv_{i})=0 \ \ \mathrm{in} \ 2B_{i}\\
\ v_{i}\in w_{i}+W^{1,H_{i,m}}_{0}(2B_{i})\;,
\end{cases}
\end{flalign*}
for which we get 
\begin{flalign}\label{59}
\mean{2B_{i}}H(x_{i,m},Dv_{i}) \, dx \le c \mean{2B_{i}}H(x_{i,m},Dw_{i}) \, dx\;,
\end{flalign}
with $c\equiv c(n,\nu,L,p,q)$. Notice that the right-hand side of \eqref{59} is finite, sice $Dw_{i}\in L^{q}(2B_{i})$ (by \rif{53}). The weak form of the equations solved by $w_{i}$ and $v_{i}$ respectively can be rewritten as
\begin{flalign}\label{60}
\mean{2B_{i}}&(A(x_{i,m},Dv_{i})-A(x_{i,m},Dw_{i}))\cdot D\varphi \, dx =\mean{2B_{i}}(A(x,Dw_{i})-A(x_{i,m},Dw_{i})) \cdot D\varphi \, dx\;,
\end{flalign}
that holds for every choice of smooth test function $\varphi \in C^{\infty}_{c}(2B_{i})$. As in \cite{CM3}, the function $\varphi:=v_{i}-w_{i}$ is admissible in \eqref{60} and this gives, via \eqref{monotonicity} and $\eqref{assa}_{4}$ (see also \cite[(4.26)]{CM3}),
\begin{flalign}\label{62}
\mean{2B_{i}}&\snr{V_{p}(Dv_{i})-V_{p}(Dw_{i})}^{2}+a(x_{i,m})\snr{V_{q}(Dv_{i})-V_{q}(Dw_{i})}^{2} \, dx \nonumber \\
&\le c\left(\osc_{2B_{i}}a\right)\mean{2B_{i}}\snr{Dw_{i}}^{q-1}\snr{Dv_{i}-Dw_{i}} \, dx=:\mathrm{(I)}\;,
\end{flalign}
where $c\equiv c(n,\nu,L,p,q)$.

\emph{Step 4: Two different phases}. We are now aiming at estimating the term $\mathrm{(I)}$ appearing in the last display. For this, following \cite{CM1, CM3}, we distinguish the two phases, that is
\begin{flalign}\label{pqp}
\inf_{x \in 2B_{i}}a(x)>K[a]_{0,\alpha}\varrho_{i}^{\alpha}\;,
\end{flalign}
which is called, as in \cite{CM1}, the $(p,q)$-phase, and 
\begin{flalign}\label{pp}
\inf_{x \in 2B_{i}}a(x)\le K[a]_{0,\alpha}\varrho_{i}^{\alpha}. 
\end{flalign}
The number $K\ge 4$ is to be determined towards the proof as a quantity depending only on $n,p,q,\nu,L,\gamma$. 

\emph{Step 5: Estimates in the $(p,q)$-phase}. We consider the case \eqref{pqp} holds. Notice that
\begin{flalign*}
\osc_{2B_{i}}a \le 4[a]_{0,\alpha}\varrho_{i}^{\alpha}\le \frac{4a(x)}{K} \quad \mathrm{for \ every \ }x \in 2B_{i}\;,
\end{flalign*}
so that, by Young's inequality yields
\begin{flalign}\label{63}
\mathrm{(I)}& \le\frac{c}{K}\mean{2B_{i}}a(x)\snr{Dw_{i}}^{q-1}\snr{Dv_{i}-Dw_{i}} \, dx \le \frac{c}{K}\mean{2B_{i}}a(x)(\snr{Dw_{i}}^{q}+\snr{Dv_{i}}^{q}) \, dx\;,
\end{flalign}
with $c\equiv c(n,p,q,\nu,L,[a]_{0,\alpha})$. Observing that
\begin{flalign}\label{64}
a(x_{i,m})\le a(x)+4[a]_{0,\alpha}\varrho_{i}^{\alpha}\le a(x)+\frac{4a(x)}{K}\le 2a(x)\;,
\end{flalign}
we estimate
\begin{eqnarray}
\nonumber \mean{2B_{i}}a(x)(\snr{Dw_{i}}^{q}+\snr{Dv_{i}}^{q}) \, dx & \stackleq{defiafrozen} & c \mean{2B_{i}}a(x_{i,m})\left(\snr{Dw_{i}}^{q}+\snr{Dv_{i}}^{q} \right)\, dx\\
\nonumber & \le & c\mean{2B_{i}}H(x_{i,m},Dw_{i}) \, dx + c\mean{2B_{i}}H(x_{i,m},Dv_{i}) \, dx\\
&\stackleq{59}&  c\mean{2B_{i}}H(x_{i,m},Dw_{i}) \, dx \nonumber \\ &
\stackleq{64}  & c\mean{2B_{i}}H(x,Dw_{i}) \, dx \nonumber\\ \nonumber  & \stackleq{54} &c\mean{4B_{i}}H(x,Du) \, dx\;,
\end{eqnarray}
with $c\equiv c(n,p,q,\nu,L,[a]_{0,\alpha})$. Using this with \eqref{63} in \eqref{62} yields
\begin{flalign}\label{65}
\mean{2B_{i}}\snr{V_{p}(Dv_{i})-V_{p}(Dw_{i})}^{2}+a(x_{i,m})\snr{V_{q}(Dv_{i})-V_{q}(Dw_{i})}^{2} \, dx\le \frac{c}{K}\mean{4B_{i}}H(x,Du) \, dx\;,
\end{flalign}
for a constant $c\equiv c(n,p,q,\nu,L,[a]_{0,\alpha})$ which is independent of $K$. We single out the following estimate from the second-last display
\begin{flalign}\label{81pre}
\mean{2B_{i}}H(x_{i,m},Dw_{i}) \, dx \le c\mean{4B_{i}}H(x,Du) \, dx\;, 
\end{flalign}
for $c\equiv c (n,p,q,\nu, L)$. 

\emph{Step 6: Estimates in the $p$-phase}. Here we consider the case in which \eqref{pp} is in force. In this case it is
\begin{flalign}\label{66}
a(x_{i,m})\le 4[a]_{0,\alpha}\varrho_{i}^{\alpha}+\inf_{x \in 2B_{i}}a(x)\le (4+K)[a]_{0,\alpha}\varrho_{i}^{\alpha}\;.
\end{flalign}
Therefore, as described in Step 2, we apply Theorem \ref{T2}, estimate \rif{46bis}, that gives
\begin{flalign}\label{68}
\left(\mean{2B_{i}}\snr{Dw_{i}}^{q} \, dx \right)^{1/q}\le c \left(\mean{4B_{i}}H(x,Dw_{i}) \, dx \right)^{1/p}\;,
\end{flalign}
with $c\equiv c(\data_0, K)$. Back to the estimation of the last term in \rif{62}, Young's inequality gives
\begin{flalign}\label{69}
\mathrm{(I)}&\le ca(x_{i,m})\mean{2B_{i}}(\snr{Dw_{i}}+\snr{Dv_{i}})^{q-1}\snr{Dw_{i}-Dv_{i}} \, dx \nonumber\\
&\le  c\mean{2B_{i}}a(x_{i,m})\snr{Dv_{i}}^{q} \, dx +c\mean{2B_{i}}a(x_{i,m})\snr{Dw_{i}}^{q} \, dx\;,
\end{flalign}
for $c\equiv c(n,\nu,L,p,q)$. We start estimating the last term in \eqref{69} as follows
\begin{eqnarray*}
\mean{2B_{i}}a(x_{i,m})\snr{Dw_{i}}^{q}\, dx &\stackleq{66}  & c\varrho_{i}^{\alpha}\mean{2B_{i}}\snr{Dw_{i}}^{q} \, dx \\
& \stackleq{68} & c\varrho_{i}^{\alpha}\left(\mean{4B_{i}}H(x,Dw_{i}) \, dx\right)^{\frac{q-p}{p}}\mean{4B_{i}}H(x,Dw_{i}) \, dx\\
& \stackrel{\mbox{H\"older}}{\leq} &  c\varrho_{i}^{\alpha}\left(\mean{4B_{i}}[H(x,Dw_{i})]^{1+\sigma} \, dx\right)^{\frac{q-p}{p(1+\sigma)}}\mean{4B_{i}}H(x,Dw_{i}) \, dx \\
&  \le & c\nr{H(\cdot,Dw_i)}_{L^{1+\sigma}(4B_{i})}^{\frac{q-p}{p(1+\sigma)}}\varrho_{i}^{\alpha-\frac{n(q-p)}{p(1+\sigma)}}\mean{4B_{i}}H(x,Dw_{i}) \, dx\\
&  \stackleq{54} & c\nr{H(\cdot,Du)}_{L^{1+\sigma}(4B_{i})}^{\frac{q-p}{p(1+\sigma)}}\varrho_{i}^{\alpha-\frac{n(q-p)}{p(1+\sigma)}}\mean{4B_{i}}H(x,Dw_{i}) \, dx
\\
&  \stackleq{quellaglobale2} & c\varrho_{i}^{\alpha-\frac{n(q-p)}{p(1+\sigma)}}\mean{4B_{i}}H(x,Dw_{i}) \, dx
\\
&  \stackleq{54} & c\varrho_{i}^{\alpha-\frac{n(q-p)}{p(1+\sigma)}}\mean{4B_{i}}H(x,Du) \, dx
\end{eqnarray*}
for $c\equiv c(\data_0, K)$. Letting 
\eqn{kappa1}
$$\kappa_{1}:=\alpha - \frac{n(q-p)}{p(1+\sigma)}> \alpha - n\left(\frac{q}{p}-1\right)\stackrel{\rif{ass4}}{\geq}0$$ and we conclude with
\begin{flalign}\label{70}
\varrho_{i}^{\alpha}\mean{2B_{i}}\snr{Dw_{i}}^{q} \, dx +\mean{2B_{i}}a(x_{i,m})\snr{Dw_{i}}^{q} \, dx \le c\varrho_{i}^{\kappa_{1}}\mean{4B_{i}}H(x,Du) \, dx\;.
\end{flalign}
In order to estimate the remaining term in the last line of \eqref{69}, the one featuring $v_i$, we need to make use of Theorem \ref{T4}. Precisely, we aim to apply it with the choice $\gamma=q/p$ and then combine the outcome with Theorem \ref{T2} for $(2q-p)=q^{2}/p$. We first show that $q^{2}/p$ enters in the range of exponents covered by \eqref{39}. For this, we notice that 
\begin{flalign*}
\frac{n\alpha(\alpha+2n)}{2(n+\alpha)^{2}} < \beta < \alpha \Longrightarrow \frac{q^{2}}{p^{2}}\stackleq{ass4} \left(1+\frac{\alpha}{n}\right)^{2}<\frac{n}{n-2\beta}<\frac{n}{n-2\alpha}\;.
\end{flalign*}
The last quantity in the previous display is meant to be $\infty$ when $\alpha=1$ and $n=2$. Therefore we have that $w_{i}\in W^{1,q^{2}/p}(2B_{i})$ and the reverse inequality holds as a consequence of \rif{quellaglobale2}
\begin{flalign}\label{75}
\left(\mean{2B_{i}}\snr{Dw_{i}}^{q^{2}/p} \, dx\right)^{p/q^{2}}\le c\left(\mean{4B_{i}}H(x,Dw_{i}) \, dx\right)^{1/p}\;,
\end{flalign}
where $c\equiv c(\data, K)$. Estimate \eqref{74} (applied with $\gamma=q/p$ and $a_{0}=a(x_{i,m})$, so that $H_{0}(z)=H(x_{i,m},z)$) reads as
\begin{flalign}\label{76}
\mean{2B_{i}}[H(x_{i,m},Dv_{i})]^{q/p} \, dx \le c\mean{2B_{i}}[H(x_{i,m},Dw_{i})]^{q/p} \, dx\;,
\end{flalign}
for $c\equiv c(n,\nu,L,p,q)$ and with a finite right-hand side. We then argue as follows:
\begin{eqnarray}\label{77}
\mean{2B_{i}}a(x_{i,m})\snr{Dv_{i}}^{q} \, dx & \stackleq{66} &c\varrho_{i}^{\alpha}\mean{2B_{i}}\snr{Dv_{i}}^{q} \, dx \nonumber \\
& \le & c\varrho_{i}^{\alpha}\mean{2B_{i}}[H(x_{i,m},Dv_{i})]^{q/p} \, dx\nonumber \\
&\stackleq{76}  & c\varrho_{i}^{\alpha}\mean{2B_{i}}[H(x_{i,m},Dw_{i})]^{q/p} \, dx\nonumber \\
& \stackleq{66}& c\varrho_{i}^{\alpha}\mean{2B_{i}}\snr{Dw_{i}}^{q} \, dx +c\varrho_{i}^{\alpha(1+q/p)}\mean{2B_{i}}\snr{Dw_{i}}^{q^{2}/p} \, dx\nonumber\\
&\stackleq{70} & c\varrho_{i}^{\kappa_{1}}\mean{4B_{i}}H(x,Dw_{i}) \, dx +c\varrho_{i}^{\alpha(1+q/p)}\mean{2B_{i}}\snr{Dw_{i}}^{q^{2}/p} \, dx\;,
\end{eqnarray}
with $c\equiv c(\data_0,K)$. Notice that, upon defining 
\eqn{kappa2}
$$
\kappa_{2}:=\alpha\left(1+\frac{q}{p}\right)-\frac{n}{1+\sigma}\left[\left(\frac{q}{p}\right)^{2}-1\right]>
\left(1+\frac{q}{p}\right)\left[\alpha - n\left(\frac{q}{p}-1\right)\right]\stackrel{\rif{ass4}}{\geq} 0\;,
$$
we have
\begin{eqnarray}\label{78}
\varrho_{i}^{\alpha(1+q/p)}\mean{2B_{i}}\snr{Dw_{i}}^{q^{2}/p} \, dx & \stackleq{75} &c\varrho_{i}^{\alpha(1+q/p)}\left(\mean{4B_{i}}H(x,Dw_{i}) \, dx\right)^{q^{2}/p^{2}-1}\mean{4B_{i}}H(x,Dw_{i}) \, dx\nonumber \\
& \stackrel{\mbox{H\"older}}{\leq} &c\varrho_{i}^{\alpha(1+q/p)}\left(\mean{4B_{i}}[H(x,Dw_{i})]^{1+\sigma} \, dx\right)^{\frac{q^{2}/p^{2}-1}{1+\sigma}}\mean{4B_{i}}H(x,Dw_{i}) \, dx\nonumber \\
& \leq &c\varrho_{i}^{\alpha(1+q/p)-n\frac{q^{2}/p^{2}-1}{1+\sigma}}\nr{H(\cdot,Dw_i)}_{L^{1+\sigma}(4B_{i})}^{q^{2}/p^{2}-1}\mean{4B_{i}}H(x,Dw_{i}) \, dx\nonumber \\
&\stackleq{54} &  c\varrho_{i}^{\kappa_{2}}\nr{H(\cdot,Du)}_{L^{1+\sigma}(4B_{i})}^{q^{2}/p^{2}-1}\mean{4B_{i}}H(x,Dw_{i}) \, dx\nonumber \\ & \le & c\varrho_{i}^{\kappa_{2}}\mean{4B_{i}}H(x,Dw_{i}) \, dx \nonumber
\\ & \stackleq{54} & c\varrho_{i}^{\kappa_{2}}\mean{4B_{i}}H(x,Du) \, dx\;,
\end{eqnarray}
where $c\equiv c(\data_0,K)$. 
Collecting estimates \eqref{77}, \eqref{78} yields
$$
\mean{2B_{i}}a(x_{i,m})\snr{Dv_{i}}^{q} \, dx  \leq  c\varrho_{i}^{\kappa_{2}}\mean{4B_{i}}H(x,Du) \, dx\;.
$$
Using this last estimate and \rif{70} in \rif{69}, we again obtain
\begin{flalign*}
\mathrm{(I)}\le c\varrho_{i}^{\kappa_{1}}\mean{4B_{i}}H(x,Du) \, dx\;\;,
\end{flalign*}
with $c\equiv c(\data_0, K)$. We have also used that $\kappa_1 < \kappa_2$ (compare \rif{kappa1} and \rif{kappa2}) and that $\varrho_i \leq 1$. Finally, this last estimate and \eqref{62} lead to
\begin{flalign}\label{79}
\mean{2B_{i}}\snr{V_{p}(Dv_{i})-V_{p}(Dw_{i})}^{2}+a(x_{i,m})\snr{V_{q}(Dv_{i})-V_{q}(Dw_{i})}^{2} \, dx \le c_{*}\varrho_{i}^{\kappa_{1}}\mean{4B_{i}}H(x,Du) \, dx\;,
\end{flalign}
for $c_{*}\equiv c_{*}(\data_0, K)$. We also observe that, by first using \eqref{70} and then $\eqref{54}$ we obtain the following analog of \eqref{81pre}:
\begin{flalign}\label{81}
\mean{2B_{i}}H(x_{i,m},Dw_{i}) \, dx \le c\mean{4B_{i}}H(x,Du) \, dx\;,
\end{flalign}
where this time it is $c\equiv c(\data_0, K)$.

\emph{Step 7: Matching the two phases and comparison estimates}. Summarizing the content of \eqref{65} and \eqref{79}, we have in both cases \eqref{pqp} and \eqref{pp} that the following inequality holds:
\begin{flalign*}
\mean{2B_{i}}&\snr{V_{p}(Dw_{i})-V_{p}(Dv_{i})}^{2}+a(x)\snr{V_{q}(Dw_{i})-V_{q}(Dv_{i})}^{2} \, dx \\
& \le\mean{2B_{i}}\snr{V_{p}(Dw_{i})-V_{p}(Dv_{i})}^{2}+a(x_{i,m})\snr{V_{q}(Dw_{i})-V_{q}(Dv_{i})}^{2} \, dx\le \left(\frac{\bar{c}}{K}+c_{*}\varrho_{i}^{\kappa_1}\right)\mean{4B_{i}}H(x,Du) \, dx\;\;,
\end{flalign*}
with $\bar{c}\equiv \bar{c}(n,\nu,L,p,q)$ and $c_{*}\equiv c_{*}(\data_0, K)$, where $K\geq 4$ is still to be chosen and $\kappa_1$ has been defined in \rif{kappa1}. This last estimate and \eqref{56} (recall that the $\varepsilon$ in \eqref{56} is arbitrary) then give
\begin{flalign}\label{82}
\mean{2B_{i}}&\snr{V_{p}(Dv_{i})-V_{p}(Du)}^{2}+a(x)\snr{V_{q}(Dv_{i})-V_{q}(Du)}^{2} \, dx \nonumber \\
& \le \left(2\varepsilon +2c_{*}r^{\kappa_{1}}+\frac{2\bar{c}}{K}\right)\mean{4B_{i}}H(x,Du) \, dx+2c_{\varepsilon}\mean{4B_{i}}H(x,F) \, dx\;,
\end{flalign}
for all $\varepsilon \in (0,1)$, where again it is $\bar{c}\equiv \bar{c}(n,\nu,L,p,q)$ and $c_{*}\equiv c_{*}(\data_0, K)$, where $K\geq 4$, while $c_{\varepsilon}\equiv c_{\varepsilon}(n,p,q,\nu,L,\varepsilon)$ and $\kappa_1>0$ has been defined in \rif{kappa1}. Here we also used the fact that $\varrho_{i}\le r$ (see Step 1). We now adopt the notation
$$S(\varepsilon,r,K,M):=2\varepsilon+2c_{*}r^{\kappa_1}+\frac{2\bar{c}}{K}+\frac{2c_{\varepsilon}}{M}$$
and, using the information contained in \cite[$(4.14)_{2}$]{CM3} in \eqref{82}, we establish that for every $K\ge 4$ the estimate \begin{flalign}\label{83}
\mean{2B_{i}}\snr{V_{p}(Dv_{i})-V_{p}(Du)}^{2}+a(x)\snr{V_{q}(Dv_{i})-V_{q}(Du)}^{2} \, dx\le S(\varepsilon,r,K,M)\lambda
\end{flalign}
holds for all the balls $B_{i}$ from the covering displayed in \cite[(4.11)]{CM3}. Notice that at this stage this in an estimate which is phase-independent: no matter of which among \eqref{pqp} and \eqref{pp}, we have that occurs \rif{83} holds in any case. Moreover, we are still free to choose $K\ge 4$. 

\emph{Step 8: The two phases at a different threshold}. The goal here is to show that the estimate
\begin{flalign}\label{84}
\mean{2B_{i}}H(x_{i,m},Dv_{i}) \, dx =\mean{2B_{i}}\snr{Dv_{i}}^{p}+a(x_{i,m})\snr{Dv_{i}}^{q} \, dx \le c\lambda,
\end{flalign}
holds for $c\equiv c(\data_0)$. For this we simply consider the two alternatives
\begin{flalign}\label{85}
\inf_{x \in 2B_{i}}a(x)>10[a]_{0,\alpha}\varrho_{i}^{\alpha}\quad \mathrm{and} \quad \inf_{x\in 2B_{i}}a(x)\le 10[a]_{0,\alpha}\varrho_{i}^{\alpha}\;,
\end{flalign}
which are nothing but \eqref{pqp} and \eqref{pp} with $K=10$ respectively. We start considering the case in which the first inequality in \eqref{85} holds; by using \eqref{59} first and then \eqref{81pre} we get that
\begin{flalign}\label{86}
\mean{2B_{i}}H(x_{i,m},Dv_{i}) \, dx \le c\mean{4B_{i}}H(x,Du) \, dx\;,
\end{flalign}
for $c\equiv c(n,\nu,L,p,q)$. At this stage \eqref{84} follows recalling the setting of \cite[$(4.14)_{2}$]{CM3}. In case the second inequality in $\eqref{85}$ holds, we similarly use \eqref{59} again and \eqref{81} with $K=10$, that renders \eqref{86} with a constant $c\equiv c(\data_0)$. Therefore, \eqref{84} follows in every case.

\emph{Step 9: A priori estimates for $Dv_{i}$}. The same as in the proof of \cite[Theorem 1.1]{CM3}.

\emph{Step 10: Estimates involving level sets}. The same as in the proof of \cite[Theorem 1.1]{CM3}.

\emph{Step 11: Integration and conclusion}. The same as in the proof of \cite[Theorem 1.1]{CM3} and this concludes the proof of Theorem \ref{cz}. We only remark that the peculiar dependence on the constants on $\data_0$ comes from the estimates in Steps 2-8, and finally reflects in \rif{final-estimate}.\\\\
\textbf{Acknowledgments.} This work was supported by the \emph{Engineering and Physical Sciences Research Council} $[EP/L015811/1]$. We thank the referee for a careful reading of the first version of the manuscript.

\end{document}